\DeclareSymbolFont{rsfs}{U}{rsfs}{m}{n}
\DeclareSymbolFontAlphabet{\mathscrsfs}{rsfs}
\newtheorem{theorem}{Theorem}
\newtheorem{lemma}[theorem]{Lemma}
\newtheorem{remark}{Remark}
\newtheorem{proposition}[theorem]{Proposition}
\renewcommand{\P}{\operatorname{\mathbb{P}}}
\newcommand{\E}{\operatorname{\mathbb{E}}}
\newcommand{\R}{\mathbb{R}}
\newcommand{\rmd}{\mathrm{d}}
\DeclareMathOperator{\cov}{\mathrm{cov}}
\newcommand{\Tr}{\mathrm{Tr}}
\newcommand{\sTAP}{\mbox{\rm\tiny TAP}}
\newcommand{\eps}{\varepsilon}
\def\b0{{\boldsymbol{0}}}
\def\normal{{\sf N}}
\def\cE{{\mathcal E}}
\def\cuF{\mathscrsfs{F}}
\def\<{{\langle}}
\def\>{{\rangle}}
\numberwithin{equation}{section}
\numberwithin{theorem}{section}
\begin{document}

\title{Bounds on the covariance matrix of the Sherrington--Kirkpatrick model}

\author{Ahmed El Alaoui\thanks{Department of Statistics and Data Science, Cornell University. Email: elalaoui@cornell.edu.}, 
\;\; Jason Gaitonde\thanks{Department of Computer Science, Cornell University.}}

\date{}
\maketitle

\begin{abstract}
We consider the Sherrington-Kirkpatrick model with no external field and inverse temperature $\beta<1$ and prove that the expected operator norm of the covariance matrix of the Gibbs measure is bounded by a constant depending only on $\beta$.  
This answers an open question raised by Talagrand, who proved a bound of $C(\beta) (\log n)^8$. 
Our result follows by establishing an approximate formula for the covariance matrix which we obtain by differentiating the TAP equations and then optimally controlling the associated error terms. We complement this result by showing diverging lower bounds on the operator norm, both at the critical and low temperatures.       
\end{abstract}


\section{Introduction and main result}
\label{sec:1}
We consider the Sherrington--Kirkpatrick model of spin glasses, a Gibbs distribution over the hypercube $\{-1,+1\}^n$ given by the expression 
\begin{equation}\label{eq:sk}
\mu(\sigma) = \, \frac{1}{Z} \exp\Big\{ \frac{\beta}{\sqrt{n}}\sum_{ i<j } g_{ij} \sigma_i \sigma_j \Big\}\, , ~~~~~  \sigma \in \{-1,+1\}^n \, , 
\end{equation}
where $\beta \ge  0$ is the inverse temperature parameter and $(g_{ij} )_{i<j}$ are the random coupling coefficients assumed to be i.i.d.\ $\normal(0,1)$ random variables. 
We are interested in the behavior of the $n \times n$ covariance matrix of this probability measure: 
\begin{equation}\label{eq:cov}
\big(\cov(\mu)\big)_{ij} = \big\langle \sigma_i \,  \sigma_j \big\rangle  \, , ~~~~~ i , j \in [1,n] \, .
\end{equation}
Here, the brackets indicate the average with respect to $\mu$. It is expected that the operator norm of $\cov(\mu)$ is of constant order in $n$ whenever the model is at high temperature, i.e., for all $\beta <1$, and that it must diverge at the critical and low temperatures $\beta \ge 1$. Talagrand proved that for all $\beta<1$ there exists $C(\beta)<\infty$ such that 
\[\E \|\cov(\mu)\|_{\textup{op}} \le C(\beta)(\log n)^{8}\, ,\] 
and conjectured that the logarithmic term can be removed entirely~\cite[Section 11.5]{talagrand2011mean2}. His proof relies on the moment method and bounds the expectation of the trace of large powers of the covariance matrix; a method known to be loose by a logarithmic factor for random matrices with i.i.d.\ entries.  

Recently, Bauerschmidt and Bodineau~\cite{bauerschmidt2019very} proved a decomposition theorem for Ising measures into a log-concave mixture of product measures, provided that their interaction matrix $J$ is positive semi-definite and satisfies the operator norm bound $\|J\|_{\text{op}} < 1$. The authors used this decomposition to prove a log-Sobolev inequality for such measure for a notion of the discrete gradient. 
In the special case of the SK model, their decomposition implies that $\E \|\cov(\mu)\|_{\text{op}}$ is bounded for all $\beta < 1/4$.  
See also the work of Eldan, Koehler and Zeitouni \cite{eldan2021spectral} who proved under the same conditions a spectral gap inequality for Glauber dynamics. Such functional inequalities are expected to hold in the entire high-temperature regime, this is however still an open problem.      
In this paper we show boundedness of the operator norm for all $\beta<1$:

\begin{theorem}\label{thm:main1}     
For $\beta<1$ there exists $C(\beta)<\infty$ such that $\E \|\cov(\mu)\|_{\textup{op}} \le C(\beta)$ for all $n \ge 1$. 
\end{theorem}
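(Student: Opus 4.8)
The plan is to follow the strategy described in the abstract: derive an approximate fixed-point formula for $\cov(\mu)$ by differentiating the TAP equations, then control the resulting error terms so tightly that no logarithmic factor survives. Recall the TAP equations for the SK model with no external field: the magnetization vector $\m = (\langle \sigma_i\rangle)_{i\in[n]}$ satisfies (approximately)
\begin{equation}
m_i = \tanh\Big( \frac{\beta}{\sqrt n} \sum_{j} g_{ij} m_j - \beta^2(1-q) m_i\Big),
\end{equation}
where $q = \frac1n\sum_i m_i^2$ is the self-overlap. To bring in the covariance, I would perturb the model by a small external field $h$, so that $\mu_h(\sigma) \propto \mu(\sigma)\exp(\langle h,\sigma\rangle)$; then $\partial \m(h)/\partial h\big|_{h=0} = \cov(\mu)$. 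Differentiating the TAP equations in $h$ and evaluating at $h=0$ gives a linear self-consistent equation for $\cov(\mu)$ of the schematic form $\cov(\mu) = D + D\,\big(\tfrac{\beta}{\sqrt n} G - \beta^2(1-q) I\big)\cov(\mu) + (\text{error})$, where $D = \diag(1-m_i^2)$ and $G = (g_{ij})$ is the (symmetrized) GOE-like coupling matrix. Solving this formally yields an approximate representation $\cov(\mu) \approx \big(I - D(\tfrac{\beta}{\sqrt n}G - \beta^2(1-q)I)\big)^{-1} D$, and the operator norm of the right-hand side is $O(1)$ for $\beta<1$ because $\|G\|_{\mathrm{op}}/\sqrt n \to 2$, $\|D\|_{\mathrm{op}}\le 1$, and the relevant resolvent is controlled away from its singularity by the high-temperature condition (after accounting for the Onsager correction, which shifts the spectrum favorably).

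The key steps, in order, are: (i) make the TAP equations and their validity rigorous at the level of precision needed — here I expect to invoke or re-derive quantitative TAP bounds valid for $\beta<1$, controlling $\|\m - F(\m)\|$ for the TAP map $F$ with high probability; (ii) justify differentiating in the external field, i.e., show the magnetization map $h\mapsto \m(h)$ is differentiable with derivative $\cov(\mu_h)$ and that the TAP relation is preserved under differentiation up to a controlled error — this requires uniform control of the error terms in a neighborhood of $h=0$, not just at $h=0$; (iii) analyze the linearized equation, writing $\cov(\mu) = (I - M)^{-1}D + \mathcal{E}$ where $M = D(\tfrac{\beta}{\sqrt n}G - \beta^2(1-q)I)$, and bound $\|(I-M)^{-1}\|_{\mathrm{op}}$ using resolvent estimates for deformed GOE matrices together with concentration of $q$ around its replica-symmetric value $q_\star(\beta)$; (iv) bound the error term $\mathcal{E}$ in operator norm — this is where the "optimal control" must happen, since a crude bound would reintroduce logarithms. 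Finally, combine (iii) and (iv) and take expectations, handling the low-probability event where the TAP estimates or the norm of $G$ deviate by a separate crude argument (e.g. the trivial bound $\|\cov(\mu)\|_{\mathrm{op}}\le n$).

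The main obstacle is step (iv): controlling the error terms from differentiating the TAP equations in operator norm rather than in a weaker norm, and doing so with a bound that stays $O(1)$ uniformly in $n$. The error has contributions from the higher-order Taylor remainder of $\tanh$, from the fluctuations of $q$ and the Onsager term, and from the discrepancy between the true Gibbs magnetizations and the exact TAP solution; each of these is a priori only small in, say, $\ell^2$ or entrywise $\ell^\infty$ norm, and naively converting such bounds to operator-norm bounds costs a factor of $\sqrt n$ or $n$. The resolution should be to exploit the self-consistent structure: write the error equation as $\mathcal{E} = (I-M)^{-1}(\text{raw error})$ and argue that the raw error, when paired against the resolvent, only ever appears contracted with vectors in a way that the GOE smoothing and the $1/\sqrt n$ scaling tame — effectively a bootstrap where one first establishes a weak a priori bound on $\|\cov(\mu)\|_{\mathrm{op}}$ (even the logarithmic Talagrand bound, or a polynomial one) and then feeds it back through the fixed-point equation to remove the extra factors. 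Carrying out this bootstrap cleanly, and verifying that it closes for every $\beta<1$ rather than only for small $\beta$, is the technical heart of the argument.
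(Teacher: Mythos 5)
Your overall architecture --- an approximate linear relation $((1+\beta^2)I-\beta A)\cov(\mu)\approx I$ coming from the TAP heuristic, a resolvent bound using $\lambda_{\min}((1+\beta^2)I-\beta A)\ge 1+\beta^2-\beta\|A\|_{\textup{op}}\to(1-\beta)^2>0$, and the trivial bound $\|\cov(\mu)\|_{\textup{op}}\le n$ on the exceptional event --- matches the paper's. But the step you yourself single out as the technical heart, your step (iv), is precisely where your plan has a genuine gap. You insist on controlling the error matrix in \emph{operator} norm and propose a bootstrap through the fixed-point equation; it is not at all clear this closes, and it is not what is needed. The resolution is to bound the error in \emph{Frobenius} norm:
\[
\E\Big[\big\|\big((1+\beta^2)I-\beta A\big)\cov(\mu)-I\big\|_{F}^{2}\Big]\le C(\beta),
\]
i.e.\ each off-diagonal entry of the error is $O(1/n)$ in $L^2$, so the total squared Frobenius norm is already $O(1)$; since $\|\cdot\|_{\textup{op}}\le\|\cdot\|_{F}$, the conversion you worry about costs nothing and no factor of $\sqrt n$ or $n$ ever appears. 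Moreover this bound is not obtained by differentiating TAP equations, by resolvent smoothing, or by any a priori estimate on $\cov(\mu)$: it is an exact second-moment computation. Writing $P=((1+\beta^2)I-\beta A)\cov(\mu)$, one expands $\|P-I\|_F^2=\|P\|_F^2-2\Tr(P)+n$, eliminates the explicit disorder by Gaussian integration by parts via $\frac{\rmd}{\rmd g_{k\ell}}\langle\sigma_i\sigma_j\rangle=\frac{\beta}{\sqrt n}\big[\langle\sigma_i\sigma_j\sigma_k\sigma_\ell\rangle-\langle\sigma_i\sigma_j\rangle\langle\sigma_k\sigma_\ell\rangle\big]$, and reduces everything to overlap moments $\E\langle R_{12}^2\rangle$, $\E\langle R_{12}R_{13}R_{23}\rangle$, $\E\langle R_{12}^4\rangle$, $\E\langle R_{12}^2R_{23}^2\rangle$, \dots. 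Plugging in the known sharp high-temperature asymptotics of these moments (Talagrand; Bardina--M\'arquez-Carreras--Rovira--Tindel), the $O(n)$ contributions from $\|P\|_F^2$ and $2\Tr(P)$ cancel exactly against the $+n$, leaving a constant depending only on $\beta$. Without some such cancellation mechanism your step (iv) does not go through; the a priori $(\log n)^8$ bound fed back through the fixed-point equation has no evident reason to contract to $O(1)$.

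Two further remarks. First, your steps (i)--(ii) are unnecessary detours at zero external field: $\m=0$ by symmetry, so there is no magnetization TAP equation to make rigorous and no differentiation in $h$ to justify; the entrywise correlation identity of Theorem~\ref{thm:main2} is established directly by the moment computation above, with the optimal $O(1/n^2)$ error per squared entry (a weaker $O(1/n^{1+\eps})$ entrywise rate, as in prior work, would \emph{not} suffice to make the Frobenius norm $O(1)$). The external-field/TAP-Hessian picture you describe is exactly the paper's heuristic motivation, but it enters the proof nowhere. Second, your resolvent step is essentially correct as stated and matches the paper: on the event $\lambda_{\min}((1+\beta^2)I-\beta A)>\eps$ one gets $\|\cov(\mu)\|_{\textup{op}}\le\eps^{-1}(\|P-I\|_F+1)$, and the complementary event has exponentially small probability, so the trivial bound $n$ there is harmless.
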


Extensions to a non-zero external field, possibly relevant to the problem of proving Poincaré and log-Sobolev inequalities for $\mu$ for all $\beta<1$, are discussed in Section~\ref{sec:2}. 
    
Conversely, we show diverging lower bounds on the operator norm for $\beta \to 1^-$, $\beta=1$ and $\beta>1$:

\begin{theorem}\label{thm:main3}
\phantom{AA}
\begin{itemize}
\item For $\beta<1$,  $\liminf_{n\to \infty} \E \|\cov(\mu)\|_{\textup{op}} \geq 1/(1-\beta^2)$. 
\item For $\beta=1$, $\E \|\cov(\mu)\|_{\textup{op}} \ge c\big(n \big / \log n\big)^{3/16}$,
where $c>0$ is an absolute constant, for all $n \ge 2$.
\item For $\beta >1$, there exists $c(\beta)>0$ such that  $\E \|\cov(\mu)\|_{\textup{op}} \ge c(\beta) n$ for all $n \ge 1$. 
\end{itemize}
\end{theorem}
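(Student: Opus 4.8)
The three regimes call for rather different arguments, so I would treat them separately while exploiting a common theme: lower bounding $\|\cov(\mu)\|_{\textup{op}}$ by testing against a well-chosen vector or by relating it to a scalar observable whose fluctuations are understood. For the case $\beta<1$, the plan is to note that $\|\cov(\mu)\|_{\textup{op}} \ge \frac{1}{n}\sum_{i<j}\langle\sigma_i\sigma_j\rangle^2 \cdot(\text{something})$ is too lossy; instead I would use $\|\cov(\mu)\|_{\textup{op}} \ge n^{-1}\bm{1}^\top \cov(\mu)\bm{1}$ is also not tight. The right move is to recall that the largest eigenvalue dominates $n^{-1}\Tr(\cov(\mu)^2)^{1/2}$ only up to $\sqrt n$, so instead I would use the variational characterization against a random test vector $\bm{g}$ and the known CLT for the overlap. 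Concretely, by the replica-symmetric formula and standard high-temperature results (Talagrand, Guerra), the quantity $\frac{1}{n}\sum_{i,j}\langle\sigma_i\sigma_j\rangle^2 = n\,\E_\sigma\E_{\sigma'}[R_{12}^2]$ concentrates, and more refined: the diagonal-free sum $\sum_{i<j}\langle\sigma_i\sigma_j\rangle^2$ has expectation $\asymp \frac{1}{1-\beta^2}\cdot\frac{n}{2}$ while we need a spectral statement. The cleanest route: use the approximate covariance formula from Theorem \ref{thm:main1}'s proof (the differentiated TAP equations), which should express $\cov(\mu)\approx (\bm{I}-\beta \bm{A})^{-1}$-type resolvent up to lower order, where $\bm{A}$ is a GOE-like matrix with semicircle spectrum on $[-2,2]$ after scaling; then $\|\cov(\mu)\|_{\textup{op}}\gtrsim \frac{1}{1-\beta}$ near the edge, and a more careful second-order expansion recovers the constant $\sqrt{2/(\pi(1-\beta^2))}$ via the edge behavior of the semicircle law and Gaussian fluctuations of the top eigenvalue direction. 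I expect the constant to emerge from $\E\langle\sigma_1\rangle^2$-type one-point functions combined with the resolvent identity.

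For $\beta>1$, the argument should be soft and robust: at low temperature the Gibbs measure concentrates near a pair of antipodal "pure states," so for typical disorder there is a direction $\bm{v}$ (essentially the magnetization vector $\bm{m}=\langle\bsigma\rangle$ inside a pure state, or the leading singular direction) along which $\langle(\bm{v}^\top\bsigma)^2\rangle - \langle\bm{v}^\top\bsigma\rangle^2 \gtrsim n$. More precisely, I would pick $\bm{v}$ to be a unit vector aligned with the difference of the two dominant pure-state barycenters; the $\pm$ symmetry of the model with no external field forces $\langle\bm{v}^\top\bsigma\rangle$ to have variance of order $n$ because the measure splits its mass across the two antipodes, each carrying an $\Omega(1)$ magnetization by the positivity of the self-overlap $q(\beta)>0$ for $\beta>1$. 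This gives $\bm{v}^\top\cov(\mu)\bm{v}\gtrsim n$, hence $\|\cov(\mu)\|_{\textup{op}}\ge c(\beta)n$ deterministically on a high-probability event, and $\E$ follows since the operator norm is always nonnegative and we can absorb the bad event. The main technical input is a quantitative lower bound on $\E q_{\textup{EA}}(\beta) = \E\langle\sigma_1\rangle^2$ being bounded away from zero for $\beta>1$, which follows from the failure of the high-temperature fixed point or directly from a second-moment/interpolation argument.

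For the critical case $\beta=1$, which I expect to be the hardest, neither concentration onto pure states nor a clean resolvent expansion is available, so I would aim for the polynomial-in-$n$ lower bound $\E\|\cov(\mu)\|_{\textup{op}}^2\ge c(n/\log n)^{3/16}$ through a moment computation on a scalar statistic. The idea is to consider $X = \sum_{i<j} g_{ij}\langle\sigma_i\sigma_j\rangle / \sqrt n$, whose derivative structure via Gaussian integration by parts ties $\E X$ to $\frac{\beta}{n}\sum_{i<j}(1-\langle\sigma_i\sigma_j\rangle^2)$ and higher moments to traces of powers of $\cov(\mu)$; alternatively, work with $\|\cov(\mu)\|_{\textup{HS}}^2 = \sum_{i,j}\langle\sigma_i\sigma_j\rangle^2$ and the known fact that at $\beta=1$ the rescaled overlap has non-Gaussian, heavy fluctuations of order $n^{-1/4}$ (the conjectured/established critical window), so that $\E\|\cov(\mu)\|_{\textup{HS}}^2 = n + n^2\,\E R_{12}^2 \asymp n^{3/2}$, and then convert Hilbert--Schmidt to operator norm losing a factor of rank. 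The exponent $3/16$ and the $\log n$ loss strongly suggest that the actual proof controls $\E\|\cov(\mu)\|_{\textup{op}}^2$ from below by chaining: bound $\|\cdot\|_{\textup{op}}^2 \ge \|\cdot\|_{\textup{HS}}^2 / \textup{rank}$, get a high-probability bound on an effective rank (number of eigenvalues comparable to the top) via a fourth-moment/variance estimate on $\Tr\cov(\mu)^2$ versus $\Tr\cov(\mu)^4$, and optimize. The obstacle I anticipate is getting the matching lower bound on $\E\|\cov(\mu)\|_{\textup{HS}}^2$ at exactly $\beta=1$ rigorously — this seems to require either Talagrand-type cavity estimates pushed to the critical point or a careful truncation argument controlling where the Gibbs mass sits, and squeezing out the right power of $n$ while paying only a logarithmic factor is where the delicate work lies.
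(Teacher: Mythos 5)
Your proposal circles the right objects (overlap moments, the Frobenius norm, replica structure) but in each of the three regimes it misses the step that makes the paper's proof work, and the single unifying inequality you explicitly reject --- $\|\cov(\mu)\|_{\textup{op}} \ge \|\cov(\mu)\|_F/\sqrt{n} = \sqrt{n}\,\langle R_{12}^2\rangle^{1/2}$, valid because $\cov(\mu)$ is positive semi-definite and $\|\cov(\mu)\|_F^2=n^2\langle R_{12}^2\rangle$ --- is precisely the one the paper uses in all three bullets. For $\beta<1$ you dismiss this as not tight and propose instead a resolvent/semicircle-edge argument based on Theorem~\ref{thm:main2}. That route does not go through: the error $P-I$ is only controlled in Frobenius norm by a constant $C(\beta)$ that is not small, so passing to $\cov(\mu)-((1+\beta^2)I-\beta A)^{-1}=((1+\beta^2)I-\beta A)^{-1}(P-I)$ multiplies it by a resolvent of operator norm of order $(1-\beta)^{-2}$, which can swamp the putative main term near the edge. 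Moreover the constant $\sqrt{2/(\pi(1-\beta^2))}$ has nothing to do with the semicircle edge: it is $\E|z|=\sqrt{2/\pi}$ arising from the overlap CLT $(n(1-\beta^2))^{1/2}R_{12}\Rightarrow$ standard Gaussian, applied after the Frobenius lower bound via Cauchy--Schwarz and Portmanteau. Your suggestion that it emerges from $\E\langle\sigma_1\rangle^2$-type one-point functions cannot work, since $\langle\sigma_i\rangle=0$ identically in zero external field.

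For $\beta>1$, the ``pair of antipodal pure states'' picture is not available (SK at low temperature is full-RSB with a continuum of states), and again $\E\langle\sigma_1\rangle^2=0$ by sign symmetry, so the technical input you name is vacuous as stated. The correct, much softer input is $\lim_n\E\langle R_{12}^2\rangle=\int q^2\,\zeta^*_\beta(\rmd q)>0$ (nontriviality of the Parisi measure for $\beta>1$), which combined with the same Frobenius lower bound and $|R_{12}|\le 1$ already yields the claimed $c(\beta)\sqrt{n}$. For $\beta=1$ your sketch is essentially an acknowledged gap: the paper obtains the needed overlap lower bound by combining two specific known results --- Chatterjee's bound $\E\langle|R_{12}|^3\rangle\ge 1/(Ln)$ and Talagrand's tail estimate for $R_{12}$ at criticality --- to deduce $\E\langle R_{12}^2\rangle\ge c\,n^{-13/16}(\log n)^{-3/16}$, and then applies $\E\|\cov(\mu)\|^2_{\textup{op}}\ge n\,\E\langle R_{12}^2\rangle$. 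The exponent $3/16$ is bookkeeping from that tail bound, not the output of an effective-rank or chaining argument (and the critical overlap scale is $n^{-1/3}$, not $n^{-1/4}$). So all three bullets, as proposed, have genuine gaps.
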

 
In Proposition~\ref{prop:near_crit} we also prove a similar lower bound to the second bullet, albeit not at $\beta=1$ but at a temperature approaching criticality: $\beta = \beta_n \to 1^-$ as $n\to \infty$ such that $n^{1/3}(1-\beta_n^2) \to + \infty$. 
The arguments used to prove Theorem~\ref{thm:main3} follow more or less directly from known results, so we delay their exposition to Section~\ref{sec:lowerb} in favor of the high-temperature result.
  
Our approach to showing Theorem~\ref{thm:main1} is by first establishing a TAP equation for the two-point correlations $\langle \sigma_i \,  \sigma_j \rangle$ with an optimal error bound of $1/n^2$ as we write next. 

\begin{theorem}\label{thm:main2}     
Let $A \in \R^{n \times n}$ such that $A_{ii}=0$ and $A_{ij}= A_{ji}= g_{ij}/\sqrt{n}$ for $i<j$.
For $\beta<1$ there exists $C(\beta)<\infty$ such that 
\begin{equation}\label{eq:main1}
 \E \Big[ \Big\| \big((1+\beta^2)I - \beta A\big)\cov(\mu) - I\Big\|_{F}^2\Big] \le C(\beta) \, .
 \end{equation}
In particular we have for $i \neq j$,
\begin{equation}\label{eq:main2}
\E \Big[ \Big( (1+\beta^2) \langle \sigma_i \,  \sigma_j \rangle - \frac{\beta}{\sqrt{n}}\sum_{k=1}^n g_{ik}\langle \sigma_k \,  \sigma_j \rangle \Big)^2\Big] \le \frac{C(\beta)}{n^2} \, .
 \end{equation}
\end{theorem}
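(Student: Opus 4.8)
The identity~\eqref{eq:main1} is the linear response of the TAP equations, and the plan is to establish it --- with the error controlled in mean square --- by running the corresponding cavity computation directly at the level of two-point functions. Since~\eqref{eq:sk} carries no external field, $\langle \sigma_i\rangle = 0$, so $M := \cov(\mu)$ satisfies $M_{ij} = \partial_{h_j}\langle\sigma_i\rangle_{\boldsymbol{h}}\big|_{\boldsymbol{h} = 0}$ for the model perturbed by a field $\boldsymbol{h}$. Differentiating the TAP fixed point $m_i = \tanh\big(h_i + \beta(A\boldsymbol{m})_i - \beta^2(1-q)m_i\big)$, $q = \tfrac1n\|\boldsymbol{m}\|^2$, with respect to $h_j$ at $\boldsymbol{h} = 0$ --- where $\boldsymbol{m} = 0$, $q = 0$, $\tanh' = 1$, and the term carrying $\partial_{h_j}q$ drops because it has a factor $m_i$ --- produces $(1+\beta^2)M_{ij} = \delta_{ij} + \beta(AM)_{ij}$, i.e.\ $\big((1+\beta^2)I - \beta A\big)M = I$; inequality~\eqref{eq:main1} is this relation equipped with a sharp mean-square error bound.

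To implement this, fix $i \ne j$ and sum the two spins $\sigma_i,\sigma_j$ out of~\eqref{eq:sk}: this expresses $\langle\sigma_i\sigma_j\rangle$ as a ratio of expectations, under the $(n-2)$-spin cavity Gibbs measure, of hyperbolic functions of the cavity fields $S_i = \tfrac{\beta}{\sqrt n}\sum_{k\ne i,j}g_{ik}\sigma_k$ and $S_j = \tfrac{\beta}{\sqrt n}\sum_{k\ne i,j}g_{jk}\sigma_k$ and of the direct coupling $c := \tfrac{\beta}{\sqrt n}g_{ij} = \beta A_{ij} = O(n^{-1/2})$. Expanding in the small parameter $c$, using the crucial fact that without an external field every cavity measure is invariant under the global flip $\sigma \mapsto -\sigma$ (so all odd cavity averages, such as $\langle\tanh S_i\rangle$ or $\langle\sigma_j\rangle$, vanish identically), and passing to the measure $\langle\cdot\rangle_{\mathrm{cav}}$ obtained by tilting with $\cosh S_i\cosh S_j$, this reduces to
\[
\langle\sigma_i\sigma_j\rangle \;=\; \langle \tanh S_i\,\tanh S_j\rangle_{\mathrm{cav}} \;+\; \beta A_{ij} \;+\; (\text{error}).
\]
The hyperbolic term is then linearized by Gaussian integration by parts in $(g_{ik})_k$ and $(g_{jk})_k$ --- which are independent of the doubly-reduced cavity measure --- after a standard interpolation in the strength of the cavity fields: each such step either contributes a term that, upon iterating the reduction over the remaining coordinates, reassembles (together with the $\beta A_{ij}$ above) the full $\beta(AM)_{ij}$, or contributes the Onsager self-correction $-\beta^2 M_{ij}$. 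The higher (Hermite) components of $\tanh$, the gap between the $(n-2)$-spin cavity average and $\cov(\mu)$ on $n$ spins, and the $O(c^2)$ remainder are collected into the error.

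It then remains to show that, for each off-diagonal entry, the error has second moment $O(1/n^2)$, so that the $n(n-1)$ off-diagonal entries contribute $O(1)$ to $\E\|B\|_F^2$, where $B := \big((1+\beta^2)I - \beta A\big)M - I$. The $n$ diagonal entries, for which $B_{ii} = \beta^2 - \langle S_i\sigma_i\rangle$ where now $S_i = \tfrac{\beta}{\sqrt n}\sum_{k\ne i}g_{ik}\sigma_k$ denotes the full cavity field at $i$, only need second moment $O(1/n)$: integration by parts gives $\E\langle S_i\sigma_i\rangle = \beta^2\tfrac{n-1}{n} - \tfrac{\beta^2}{n}\sum_{k\ne i}\E\langle\sigma_i\sigma_k\rangle^2$, hence $\E[B_{ii}] = \tfrac{\beta^2}{n}\big(1 + \sum_{k\ne i}\E\langle\sigma_i\sigma_k\rangle^2\big) = O\!\big(n^{-1}\,\E\|\cov(\mu)\|_{\textup{op}}\big)$ using $\sum_k\langle\sigma_i\sigma_k\rangle^2 \le \|\cov(\mu)\|_{\textup{op}}$ (positive semidefiniteness with unit diagonal), while the variance of $\langle S_i\sigma_i\rangle$ is controlled by concentration in the couplings. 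Finally,~\eqref{eq:main2} follows from~\eqref{eq:main1} by exchangeability of the coordinates: $\E[B_{ij}^2]$ is the same for all $i\ne j$, hence at most $\tfrac{1}{n(n-1)}\E\|B\|_F^2$.

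The main obstacle is this last step: obtaining the \emph{optimal} $1/n^2$ rate on the off-diagonal error. A crude estimate only gives $o(1)$ or $O(1/n)$ per entry; reaching $O(1/n^2)$ requires (i) systematically exploiting the exact cancellations forced by the spin-flip symmetry, so that the surviving terms already carry two extra factors of $n^{-1/2}$; (ii) controlling second moments --- that is, pairing each error term with itself, which typically couples two distinct removed-spin cavity systems and so demands further cavity reductions to decorrelate them; and (iii) feeding in an a priori operator-norm bound --- either Talagrand's $\E\|\cov(\mu)\|_{\textup{op}} \le C(\beta)(\log n)^8$, or one obtained by bootstrapping a crude version of~\eqref{eq:main1} through $\big\|\big((1+\beta^2)I - \beta A\big)^{-1}\big\|_{\textup{op}} \le (1-\beta)^{-2} + o(1)$ --- to dominate the rare configurations on which the cavity quantities are atypically large.
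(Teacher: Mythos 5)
Your overall plan --- derive the relation $\big((1+\beta^2)I-\beta A\big)\cov(\mu)\approx I$ entrywise by the cavity method and then control each off-diagonal entry in mean square --- is a reasonable heuristic route to the statement, and your reduction of \eqref{eq:main2} to \eqref{eq:main1} by exchangeability is correct (the two are in fact equivalent up to the diagonal contribution). But there is a genuine gap exactly where you flag ``the main obstacle'': the claim that each off-diagonal error term has second moment $O(1/n^2)$ is never established. Everything before that point is the standard heuristic derivation of the TAP equation for correlations, and everything after it is a list of ingredients that ``would be required'' --- systematic exploitation of spin-flip cancellations, decorrelation of two coupled cavity systems, an a priori operator-norm input --- none of which is carried out. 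Since a per-entry rate of $o(1)$ or even $O(n^{-1-\eps})$ is demonstrably insufficient (the paper notes that the Adhikari--Brennecke--von Soosten--Yau bound of this form does not imply Theorem~\ref{thm:main1}), the entire content of the theorem lives in the step you leave open. Ingredient (iii) is moreover circular as stated: feeding in Talagrand's $(\log n)^8$ bound would at best yield $\E\|B\|_F^2=O(\mathrm{polylog}(n))$ rather than $O(1)$, unless the cancellations in (i)--(ii) are tracked to two full orders in $n^{-1/2}$, which is precisely what is missing. The variance bound for the diagonal entries is likewise asserted (``controlled by concentration in the couplings'') rather than proved.

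For comparison, the paper avoids per-entry cavity analysis entirely. Writing $P=\big((1+\beta^2)I-\beta A\big)\cov(\mu)$, it expands $\E\|P-I\|_F^2=\E\|P\|_F^2-2\E\Tr(P)+n$ and evaluates both $\E\Tr(P)$ and $\E\|P\|_F^2$ in closed form by Gaussian integration by parts (Lemma~\ref{lem:partials}), which converts every term into an expectation of a replica overlap moment such as $\E\langle R_{12}^2\rangle$, $\E\langle R_{12}R_{13}R_{23}\rangle$, $\E\langle R_{12}^4\rangle$, $\E\langle R_{12}^2R_{23}^2\rangle$, $\E\langle R_{12}R_{23}R_{34}R_{14}\rangle$. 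The terms diverging in $n$ cancel identically, and the surviving constant is read off from the known sharp asymptotic expansions of these overlap moments to order $O(n^{-5/2})$ (Talagrand; Bardina, M\'arquez-Carreras, Rovira and Tindel). The delicate cancellations you would have to engineer by hand in a cavity expansion are thus imported wholesale from those results. To salvage your approach you would need either to prove the per-entry $O(1/n^2)$ bound by a genuinely quantitative cavity argument valid for all $\beta<1$, or to convert your computation into the same global overlap identities --- at which point you have reproduced the paper's proof.
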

An analogue of Eq.~\eqref{eq:main2} with a weaker error bound was recently proved by Adhikari, Brennecke, von Soosten and Yau~\cite{adhikari2021dynamical} for  fixed external fields parallel to the all-ones vector $\mathbf{1}$ up to an inverse temperature $\beta_0 = \log 2$ using what the authors call the dynamical approach to the TAP equations. The error bound they prove is of the form $C(\beta,\eps) / n^{1+\eps}$ for $\eps$ sufficiently small. This bound is unfortunately not enough to imply Theorem~\ref{thm:main1}; a bound of order $1/n^2$ seems necessary for the conclusion to follow. We elaborate on a generalization of this result to nonzero external field in Section~\ref{sec:2} below.

Theorem~\ref{thm:main1} then follows by known bounds on the tail probability of the largest eigenvalue of a GOE matrix:  
\begin{proof}[Proof of Theorem~\ref{thm:main1}]
For $\beta<1$, let $\eps \in (0, (1-\beta)^2/(1+\beta))$ and define the event 
\[\cE= \big\{\lambda_{\min}\big((1+\beta^2)I-\beta A\big)\leq \varepsilon\big\} \, .\] 
Observe that on $\cE$ we have $\lambda_{\max} (A) \geq \frac{1+\beta^2-\varepsilon}{\beta}\geq 2+\varepsilon$ 
where the second inequality follows from the bound imposed on $\eps$. It follows that $\P(\cE)\leq \exp(-c(\eps) n)$, $c(\eps)>0$ by standard results on the largest eigenvalue of a  GOE matrix (see e.g.,~\cite[Exercise 7.3.5]{vershynin2018high}).
Now conditional on the complement event $\cE^c = \{\lambda_{\min}\big((1+\beta^2)I-\beta A\big) > \varepsilon\}$ we have 
\begin{align}
\|\cov(\mu)\|_{\text{op}} &\le \big\|\big((1+\beta^2)I-\beta A\big)^{-1} \big\|_{\text{op}}   \cdot \Big(  \big\|\big((1+\beta^2)I-\beta A\big) \cov(\mu) - I \big\|_{\text{op}}  + 1 \Big)  \\
&\le \eps^{-1}  \Big(  \big\|\big((1+\beta^2)I-\beta A\big) \cov(\mu) - I \big\|_{F}  + 1 \Big) \, .
\end{align}
Using Theorem~\ref{thm:main2},
\begin{align}
\E\big[\|\cov(\mu)\|_{\text{op}} \,  \mathbf{1}_{\cE^c}  \big]  &\le  \eps^{-1}  \Big( \E\big[\big\|\big((1+\beta^2)I-\beta A\big) \cov(\mu) - I \big\|_{F}\big]  + 1 \Big) \\
&\le \eps^{-1} C(\beta)\, .
 \end{align}
Next, on the event $\cE$, we trivially upper bound $\|\cov(\mu)\|_{\text{op}}$ by $n$, allowing us to conclude:
\begin{align}
\E\big[\|\cov(\mu)\|_{\text{op}} \big] &= \E\big[\|\cov(\mu)\|_{\text{op}}  \, \mathbf{1}_{\cE} \big] + \E\big[\|\cov(\mu)\|_{\text{op}}  \,  \mathbf{1}_{\cE^c} \big]\\
&\leq n\exp(-c(\eps)n) + \eps^{-1} C(\beta) \label{eq:bound12}\\
&\leq  (e c(\eps))^{-1} + \eps^{-1} C(\beta) \, .
\end{align}
\end{proof}

\section{Nonzero external field and related work} 
\label{sec:2}
The result of Theorem~\ref{thm:main2} is inspired by the following heuristic. Let us introduce an external field $y = (y_i)_{i=1}^n \in \R^n$ to the Gibbs measure:
\begin{equation}
\mu_{y}(\sigma) = \, \frac{1}{Z(y)} \exp\Big\{ \frac{\beta}{\sqrt{n}}\sum_{ i<j } g_{ij} \sigma_i \sigma_j + \sum_{i=1}^n y_i \sigma_i \Big\}\, .\end{equation}    
For a ``typical" external field, the log-partition function of $\mu_{y}$ is expected to have a TAP representation of the following form:  
\begin{align}\label{eq:logZ}
\log Z(y) ~&= \max_{m \in (-1,1)^n} \, \Big\{\cuF_{\sTAP}(m) + \langle y , m \rangle\Big\} + o_{\P}(n) \, , ~~~~~ \mbox{where}\\
\cuF_{\sTAP}(m) &:= \frac{\beta}{2}  \langle m, A m \rangle  + \sum_{i=1}^n h(m_i) + \frac{n \beta^2}{4} (1-Q(m))^2 \, ,\label{eq:TAP}\\
Q(m) &= \frac{1}{n} \|m\|^2 \, ,~~~ h(m) = -\frac{1+m}{2}\log \left(\frac{1+m}{2}\right) - \frac{1-m}{2}\log \left(\frac{1-m}{2}\right)  \, .\nonumber
\end{align}
The above representation was recently proved for Gaussian external fields by Chen, Panchenko and Subag~\cite{chen2018generalized}. Taking two derivatives with respect to $y$ on both sides of Eq.~\eqref{eq:logZ}, we expect 
\begin{align}
\cov(\mu_{y}) &:= \Big(\langle \sigma_i \sigma_j\rangle  -  \langle \sigma_i \rangle  \langle \sigma_j\rangle\Big)_{i,j=1}^n \approx  -\big(\nabla^2 \cuF_{\sTAP}(m)\big)^{-1}  \\
&= \Big(D(m) - \beta A + \beta^2\big(1- Q(m)\big) I - (2\beta^2/n) m m^\top\Big)^{-1} \, ,\label{eq:nonzeroy}
\end{align}
where $m$ is a maximizer in~\eqref{eq:logZ} and $D(m)$ is the diagonal matrix with entries $D(m)_{ii} = 1/(1-m_i^2)$.     
Taking $y=0$, we have $m=0$ and 
\begin{equation}\label{eq:heur} 
\cov(\mu) \approx \Big((1+\beta^2) I - \beta A\Big)^{-1} \, .
\end{equation}
The approximate resolvent identity~\eqref{eq:nonzeroy} was also heuristically derived by~\cite[Eq.\ (1.12)]{adhikari2021dynamical} by directly differentiating the TAP equations for the magnetizations in the external fields, and in fact, by much earlier work~\cite[Eq.\ (3.3)]{plefka1982convergence} by equating the covariance matrix with the second-order terms in a power series expansion of the Gibbs potential.

Theorem~\ref{thm:main2} makes the above approximation precise in the sense of a bounded Frobenius norm, once the inverse is eliminated from the right-hand side. This operation will allow us to perform Gaussian integration by parts with respect to the disorder random variables $g_{ij}$ and this creates various overlap terms between independent replicas from the Gibbs measure $\mu$. Then Theorem~\ref{thm:main2} is proved by exploiting known asymptotics for these overlaps in the high-temperature regime.        

Similarly to the zero external field case, one can attempt to make the approximation~\eqref{eq:nonzeroy} precise by showing that for $\beta$ small enough, e.g., below the AT line when $y$ is Gaussian, we have  
\begin{equation} 
\E \Big[\Big\| \Big( D(m) - \beta A + \beta^2(1- Q(m))I - (2\beta^2/n) m m^\top \Big) \cov(\mu_{y}) - I \Big\|_{F}^2\Big] \le C(\beta) \, ,
\end{equation}
where $m = (\langle \sigma_i \rangle)_{i=1}^n$ is the mean vector of $\mu_{y}$. 

Boundedness of $\E \|\cov(\mu_{y})\|_{\text{op}}$ would then follow if one can show that $\lambda_{\text{max}}\big(\nabla^2 \cuF_{\sTAP}(m)\big) \le -\eps$ for some $\eps>0$ with probability at least $1- O(1/n)$ (see Eq.~\eqref{eq:bound12}). Celentano~\cite{celentano2022sudakov} recently showed that the TAP free energy $\cuF_{\sTAP}$ is locally strongly concave around one of its stationary points with probability $1-o_n(1)$ by applying a Gaussian comparison theorem carefully conditioned on a sequence of sigma-fields produced by an Approximate Message Passing (AMP) iteration. This stationary point should presumably be close to the mean vector $\langle \sigma \rangle$; see the related works~\cite{chen2021convergence,celentano2021local}. The standard theory of AMP used in that paper does not yield any quantitative control on the probability of convergence, and this approach seems to fall short of obtaining the $O(1/n)$ rate needed to obtain operator norm bounds in expectation.       

Following the technique developed in Eldan and Shamir~\cite{eldan2022log}, which was later generalized in Chen and Eldan~\cite{chen2022localization}, a bound on operator norm of $\cov(\mu_{y})$ for an external field $y$ given by Eldan's stochastic localization process can be used to prove Poincar\'e and log-Sobolev inequalities for $\mu$. Our result can be seen as a small step within this larger scope. 

We finally mention that a few weeks after a version of this manuscript was made public, a concurrent work~\cite{brennecke2022two} established~\eqref{eq:heur} in the spectral sense: $\|\cov(\mu)-((1+\beta^2) I - \beta A)^{-1}\|_{\text{op}}\to 0$ in probability for $\beta<1$ using the cluster expansion technique. Their arguments are completely independent of ours. Subsequent work~\cite{brennecke2023operator} established boundedness in probability of $\|\cov(\mu_{h\cdot \mathbf{1}})\|_{\text{op}}$ for any fixed $h\geq 0$ in a suitable region below the AT line by combining the approach outlined above of utilizing overlap moments and the AMP iteration, with the dynamical techniques of~\cite{adhikari2021dynamical}.

\section{Proof of Theorem~\ref{thm:main2}}
\label{sec:proof}
We will show that the left-hand side in Eq.~\eqref{eq:main1} is actually equal to 
\begin{equation} \label{eq:beta4} 
\frac{2\beta^4}{(1-\beta^2)^2} + O(1/\sqrt{n}) \, .
\end{equation}

\begin{remark}\label{rem:beta4}
As an aside we note that for $\beta$ small, the above quantity behaves like $O(\beta^4)$, which is in agreement with the prediction made in~\cite[Section 11.5]{talagrand2011mean2} that $\cov(\mu) \simeq I+ \beta A + o(\beta)$, since $((1+\beta^2)I - \beta A)(I+\beta A)- I = O(\beta^2)$.
\end{remark}

We let
\[P = \big((1+\beta^2) I - \beta A\big)\cov(\mu) \, .\]
We then write 
\[\big\| P - I\big\|_{F}^2 = \big\| P \big\|_{F}^2 - 2 \Tr(P) + n \, .\]
 We treat the above expression term by term and we show that both $\E \| P \|_{F}^2$ and $\E\Tr(P)$ are of the form  
 \[n + C(\beta) + O(1/\sqrt{n})\, , \]  
 therefore canceling the terms diverging in $n$.  We first analyze the trace term and turn to the norm term which is more delicate. 
 
For notation, we write $\partial_{k\ell}:=\frac{\rmd}{\rmd g_{k\ell}}$. Let us first record the following simple lemma for future reference:   

\begin{lemma}
\label{lem:partials}
 For all $i,j,k,\ell$, it holds that $\partial_{k\ell} \langle \sigma_i\sigma_j\rangle=\frac{\beta}{\sqrt{n}}\left[\langle \sigma_i\sigma_j\sigma_k\sigma_{\ell}\rangle - \langle \sigma_i\sigma_j\rangle\langle \sigma_k\sigma_{\ell}\rangle\right]$.
\end{lemma}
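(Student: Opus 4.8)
The plan is to prove this identity by directly differentiating the Gibbs average, viewed as a ratio of a weighted configuration sum and the partition function, and applying the quotient rule. Write $\langle \sigma_i \sigma_j \rangle = Z^{-1} \sum_{\sigma \in \{-1,+1\}^n} \sigma_i \sigma_j \, e^{H(\sigma)}$, where $H(\sigma) = \frac{\beta}{\sqrt{n}} \sum_{i<j} g_{ij}\sigma_i\sigma_j$ is the Hamiltonian and $Z = \sum_{\sigma} e^{H(\sigma)}$ is the partition function. Both $Z$ and the numerator are smooth (indeed entire) functions of the finitely many variables $(g_{ij})_{i<j}$, so we may differentiate freely.

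First I would record that for a fixed pair $k<\ell$, the coefficient $g_{k\ell}$ appears exactly once in the double sum, so $\frac{\rmd}{\rmd g_{k\ell}} H(\sigma) = \frac{\beta}{\sqrt{n}}\sigma_k\sigma_\ell$; consequently $\frac{\rmd}{\rmd g_{k\ell}} Z = \frac{\beta}{\sqrt{n}}\sum_{\sigma} \sigma_k\sigma_\ell\, e^{H(\sigma)}$ and $\frac{\rmd}{\rmd g_{k\ell}} \big(\sum_{\sigma} \sigma_i\sigma_j e^{H(\sigma)}\big) = \frac{\beta}{\sqrt{n}}\sum_{\sigma} \sigma_i\sigma_j\sigma_k\sigma_\ell\, e^{H(\sigma)}$. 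The quotient rule then yields
\begin{align*}
\frac{\rmd}{\rmd g_{k\ell}} \langle \sigma_i\sigma_j\rangle
&= \frac{\frac{\beta}{\sqrt{n}}\sum_{\sigma} \sigma_i\sigma_j\sigma_k\sigma_\ell\, e^{H(\sigma)}}{Z} - \frac{\sum_{\sigma} \sigma_i\sigma_j\, e^{H(\sigma)}}{Z^2}\cdot \frac{\beta}{\sqrt{n}}\sum_{\sigma} \sigma_k\sigma_\ell\, e^{H(\sigma)} \\
&= \frac{\beta}{\sqrt{n}}\big(\langle\sigma_i\sigma_j\sigma_k\sigma_\ell\rangle - \langle\sigma_i\sigma_j\rangle\langle\sigma_k\sigma_\ell\rangle\big),
\end{align*}
which is exactly the claimed formula.

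It remains to check the degenerate index configurations so that the statement holds ``for all $i,j,k,\ell$'' as written. If $k=\ell$, then $g_{kk}$ does not occur in $H$, so the left-hand side is $0$; the right-hand side is also $0$ since $\sigma_k^2 \equiv 1$ gives $\langle\sigma_i\sigma_j\sigma_k\sigma_\ell\rangle = \langle\sigma_i\sigma_j\rangle$ and $\langle\sigma_k\sigma_\ell\rangle = 1$. Similarly if $i=j$ both sides vanish. Finally, for $k>\ell$ one uses the symmetric convention $g_{k\ell} := g_{\ell k}$, which reduces this case to $k<\ell$. I do not expect any genuine obstacle here: the computation is a textbook application of the quotient rule, and the only thing requiring a little care is the index bookkeeping in the off-diagonal sum together with consistency of the symmetric convention for the disorder variables.
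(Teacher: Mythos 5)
Your proof is correct and takes essentially the same route as the paper: differentiate the Gibbs average as a quotient, note that $\tfrac{\rmd}{\rmd g_{k\ell}}H(\sigma)=\tfrac{\beta}{\sqrt{n}}\sigma_k\sigma_\ell$, and handle the degenerate case $k=\ell$ separately. You simply write out in full the quotient-rule computation that the paper summarizes in one line.
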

\begin{proof}
The above is trivially true for $k=\ell$ since the Hamiltonian has no dependence on $g_{kk}$, and $\sigma_k^2=1$. Assume $k \neq \ell$. Writing $\langle f\rangle$ as a quotient, the product rule implies that the first term can be interpreted as the Gibbs average of $f\sigma_k\sigma_{\ell}$, while the second term obtained by differentiating the partition function yields an independent Gibbs average of $\sigma_k\sigma_{\ell}$.
\end{proof}

For $\ell$ independent replicas $\sigma^1,\cdots,\sigma^\ell$ drawn from $\mu$, we write $R_{a,b} = \frac{1}{n} \sum_{i=1}^n \sigma_i^a \sigma_i^b$ for the pairwise overlap between $\sigma^a$ and $\sigma^b$, and 
\[R_{1,\cdots,\ell} = \frac{1}{n} \sum_{i=1}^n \prod_{a=1}^\ell \sigma_i^a \]
for the multi-overlap of the replicas $\sigma^1,\cdots,\sigma^{\ell}$. 

\paragraph{The trace term:} Using Gaussian integration by parts and Lemma~\ref{lem:partials}, we have  
\begin{align*}
\E\Tr(P)&=(1+\beta^2)n-\beta \E\Tr(A\cov(\mu)) =(1+\beta^2)n- \frac{\beta}{\sqrt{n}}\sum_{1 \le i \neq j \le n} \E\big[g_{ij}\langle \sigma_i\sigma_j\rangle\big] \\
&=(1+\beta^2)n -  \frac{\beta^2}{n}\sum_{i,j=1}^{n} \big(1-\E \langle \sigma_i\sigma_j\rangle^2\big) \\
&= n + n \beta^2\E \big\langle R_{12}^2\big\rangle  \, .
\end{align*}  
From~\cite[Theorem 11.5.4]{talagrand2011mean2}, it known that for $\beta<1$,
\begin{equation*}
 n\E\big\langle R_{12}^2\big\rangle = \frac{1}{1-\beta^2}+O(1/\sqrt{n}) \, .
\end{equation*}
Therefore
\begin{equation}
\label{eq:trace}
\E \Tr(P) =n+\frac{\beta^2}{1-\beta^2}+O(1/\sqrt{n})\, .
\end{equation}

\paragraph{The norm term:} We now calculate $\E\| P \|_{F}^2$.  We have
\begin{align*}
\E \|P\|_F^2&=\E \sum_{i,j=1}^n \Big((1+\beta^2)\langle \sigma_i\sigma_j\rangle -\frac{\beta}{\sqrt{n}}\sum_{k\neq i} g_{ik}\langle \sigma_j\sigma_k\rangle\Big)^2\\
&=\E\sum_{i,j=1}^n \bigg((1+\beta^2)^2\langle \sigma_i\sigma_j\rangle^2 - 2(1+\beta^2)\frac{\beta}{\sqrt{n}}\sum_{k\neq i} g_{ik}\langle \sigma_j\sigma_k\rangle\langle \sigma_i\sigma_j\rangle\\
&~~~~~~~~~~~~+\frac{\beta^2}{n} \sum_{k ,\ell \neq i} g_{ik}g_{i\ell}\langle \sigma_j\sigma_k\rangle\langle \sigma_j\sigma_{\ell}\rangle\bigg) \\
&= \text{I} + \text{II} + \text{III}\, .
\end{align*}

We deal with these terms in order. Since
\begin{equation*}
\sum_{i,j=1}^n \langle \sigma_i\sigma_j\rangle^2=\sum_{i,j=1}^n \langle \sigma_i^1\sigma_j^1\sigma_i^2\sigma_j^2\rangle=n^2\langle R_{12}^2\rangle \, ,
\end{equation*}
the first term is 
\begin{equation}
\label{eq:term1}
\text{I} = (1+\beta^2)^2 n^2\E\big\langle R_{12}^2\big\rangle \, .
\end{equation}

As for the second term, using Lemma~\ref{lem:partials} we have
\begin{align*}
\frac{\beta}{\sqrt{n}} \E\big[g_{ik}\langle \sigma_j\sigma_k\rangle\langle\sigma_i\sigma_j\rangle\big] &=
\frac{\beta}{\sqrt{n}} \E\left[\partial_{ik}[\langle \sigma_j\sigma_k\rangle\langle\sigma_i\sigma_j\rangle]\right] \\
&=\frac{\beta^2}{n} \E\big[(\langle \sigma_i\sigma_j\rangle - \langle\sigma_i\sigma_k\rangle\langle \sigma_j\sigma_k\rangle)\langle \sigma_i\sigma_j\rangle + (\langle \sigma_j\sigma_k\rangle-\langle \sigma_i\sigma_k\rangle\langle\sigma_i\sigma_j\rangle)\langle\sigma_j\sigma_k\rangle\big]\\
 &=\frac{\beta^2}{n}\E\big[\langle \sigma_i\sigma_j\rangle^2+\langle \sigma_j\sigma_k\rangle^2- 2\langle \sigma_i\sigma_j\rangle\langle \sigma_j\sigma_k\rangle\langle\sigma_i\sigma_k\rangle\big] \, .
\end{align*}

Summing the above expression over $i,j,k$, (note that the above is $0$ for $k=i$) this is 
\begin{equation*}
  2\beta n^2 \E\big\langle R_{12}^2\big\rangle + 2\beta n^2 \E\big\langle R_{13}R_{12}R_{23}\big\rangle \, .
\end{equation*}
Therefore 
\begin{equation}
\label{eq:term2}
\text{II} = -4\beta^2(1+\beta^2)n^2\left(\E\big\langle R_{12}^2\big\rangle - \E\big\langle R_{12}R_{23}R_{13}\big\rangle \right) \, .
\end{equation}

We now turn to the third term. We split the sum into a diagonal and a off-diagonal part:
\begin{equation}\label{eq:split}
\sum_{k,\ell \in [n] \setminus\{i\}}  \langle \sigma_j\sigma_k\rangle\langle \sigma_j\sigma_{\ell}\rangle = \sum_{k\neq i} g_{ik}^2\langle \sigma_j\sigma_k\rangle^2 + \sum_{k, \ell \, \in [n] \setminus\{i\}, k\neq \ell} g_{ik}g_{i\ell}\langle \sigma_j\sigma_k\rangle\langle \sigma_j\sigma_{\ell}\rangle \, .
\end{equation}
For the diagonal term in Eq.~\eqref{eq:split}, taking expectations and applying Gaussian integration by parts, we find
\begin{align*}
\frac{1}{n}\sum_{k \neq i} \E[g_{ik}^2\langle \sigma_j\sigma_k\rangle^2] 
&=\frac{1}{n}\sum_{k \neq i} \E[\langle \sigma_j\sigma_k\rangle^2]+\frac{2\beta}{n^{3/2}}\sum_{k=1}^n \E[g_{ik}\langle \sigma_j\sigma_k\rangle(\langle \sigma_i\sigma_j\rangle-\langle \sigma_i\sigma_k\rangle\langle\sigma_j\sigma_k\rangle)]\\
&=\frac{1}{n}\sum_{k\neq i} \E[\langle \sigma_j\sigma_k\rangle^2]\\
&~~~+\frac{2\beta^2}{n^{2}}\sum_{k=1}^n\E[(\langle \sigma_i\sigma_j\rangle-\langle \sigma_i\sigma_k\rangle\langle\sigma_j\sigma_k\rangle)^2]\\
&~~~+\frac{2\beta^2}{n^{2}}\sum_{k=1}^n\E[\langle \sigma_j\sigma_k\rangle(\langle \sigma_j\sigma_k\rangle-\langle \sigma_i\sigma_j\rangle\langle \sigma_i\sigma_k\rangle)]\\
&~~~-\frac{2\beta^2}{n^{2}}\sum_{k=1}^n\E[\langle \sigma_j\sigma_k\rangle (1-\langle \sigma_i\sigma_k\rangle^2)\langle \sigma_j\sigma_k\rangle]\\
&~~~-\frac{2\beta^2}{n^{2}}\sum_{k=1}^n\E[\langle \sigma_j\sigma_k\rangle \langle \sigma_i\sigma_k\rangle (\langle \sigma_i\sigma_j\rangle -\langle \sigma_i\sigma_k\rangle\langle \sigma_j\sigma_k\rangle)]\\
&=\frac{1}{n}\sum_{k\neq i}\E[\langle \sigma_j\sigma_k\rangle^2] \\
&~~+\frac{2\beta^2}{n^2}\sum_{k=1}^n \E\left[\langle \sigma_i\sigma_j\rangle^2-4\langle \sigma_i\sigma_j\rangle\langle \sigma_i\sigma_k\rangle\langle \sigma_j\sigma_k\rangle+3\langle \sigma_i\sigma_k\rangle^2\langle \sigma_j\sigma_k\rangle^2\right].
\end{align*}
Summing over $i,j$ we find  
\begin{align}
\label{eq:term3}
\frac{\beta^2}{n} \sum_{1 \le i,j,k \le n, i \neq k} \E\big[g_{ik}^2 \langle \sigma_j\sigma_k\rangle^2\big] 
&= \beta^2n(n-1) \E\big\langle R_{12}^2\big\rangle \\
&+ 2\beta^4 n \Big(\E\big\langle R_{12}^2\big\rangle - 4\E\big\langle R_{12}R_{13}R_{23}\big\rangle] + 3\E\big\langle R_{12}R_{34}R_{1234}\big\rangle\Big).\nonumber
\end{align}

For the off-diagonal term in Eq.~\eqref{eq:split}, since the random variables $g_{ij}$ are independent, we have
\begin{align*}
\frac{1}{n}\E[g_{ik}g_{i\ell}\langle \sigma_j\sigma_k\rangle \langle \sigma_j\sigma_{\ell}\rangle]
&=\frac{\beta}{n^{3/2}}\E[g_{i\ell}\left((\langle \sigma_i\sigma_j\rangle-\langle \sigma_j\sigma_k\rangle\langle \sigma_i\sigma_k\rangle) \langle \sigma_j\sigma_{\ell}\rangle +\langle \sigma_j\sigma_k\rangle (\langle\sigma_i\sigma_k\sigma_j\sigma_{\ell}\rangle-\langle \sigma_i\sigma_k\rangle\langle \sigma_j\sigma_{\ell}\rangle)\right)]\\
&= \frac{\beta^2}{n^2}\E[(\langle \sigma_j\sigma_{\ell}\rangle -\langle \sigma_i\sigma_{\ell}\rangle \langle \sigma_i\sigma_j\rangle)\langle \sigma_j\sigma_{\ell}\rangle]\\
&~~~-\frac{\beta^2}{n^2}\E[(\langle \sigma_i\sigma_j\sigma_k\sigma_{\ell}\rangle -\langle \sigma_i\sigma_{\ell}\rangle \langle \sigma_j\sigma_k\rangle)\langle \sigma_i\sigma_k\rangle\langle \sigma_j\sigma_{\ell}\rangle]\\
&~~~-\frac{\beta^2}{n^2}\E[\langle \sigma_j\sigma_k\rangle(\langle \sigma_k\sigma_{\ell}\rangle -\langle \sigma_i\sigma_{\ell}\rangle \langle \sigma_i\sigma_{k}\rangle)\langle \sigma_j\sigma_{\ell}\rangle]\\
&~~~+\frac{\beta^2}{n^2}\E[(\langle \sigma_i\sigma_j\rangle-\langle \sigma_j\sigma_k\rangle\langle \sigma_i\sigma_k\rangle)(\langle\sigma_i\sigma_j\rangle-\langle \sigma_i\sigma_{\ell}\rangle\langle \sigma_j\sigma_{\ell}\rangle)]\\
&~~~+\frac{\beta^2}{n^2}\E[(\langle \sigma_i\sigma_j\sigma_k\sigma_{\ell}\rangle-\langle \sigma_j\sigma_k\rangle\langle \sigma_i\sigma_{\ell}\rangle)(\langle\sigma_i\sigma_k\sigma_j\sigma_{\ell}\rangle-\langle \sigma_i\sigma_k\rangle\langle \sigma_j\sigma_{\ell}\rangle)]\\
&~~~+\frac{\beta^2}{n^2}\E[\langle \sigma_j\sigma_k\rangle (\langle \sigma_j\sigma_k\rangle -\langle \sigma_i\sigma_j\sigma_k\sigma_{\ell}\rangle \langle \sigma_i\sigma_{\ell}\rangle)]\\
&~~~-\frac{\beta^2}{n^2}\E[\langle \sigma_j\sigma_k\rangle (\langle \sigma_k\sigma_{\ell}\rangle -\langle \sigma_i\sigma_{k}\rangle \langle \sigma_i\sigma_{\ell}\rangle)\langle \sigma_j\sigma_{\ell}\rangle]\\
&~~~-\frac{\beta^2}{n^2}\E[\langle \sigma_j\sigma_k\rangle \langle \sigma_i\sigma_k\rangle (\langle \sigma_i\sigma_j\rangle -\langle \sigma_j\sigma_{\ell}\rangle \langle \sigma_i\sigma_{\ell}\rangle)] \, .
\end{align*}

Let's call the above expression $O_{ijk\ell}$. Summing this expression over \emph{all} $i,j,k,\ell$ and only subtracting off the diagonal terms corresponding to $k=\ell$ later (note again that the terms $k=i$ or $\ell=i$ vanish) gives

\begin{align*}
\sum_{i,j,k,\ell}O_{ijk\ell} &= \frac{\beta^2}{n^2}\E[n^4\langle R_{12}^2\rangle - n^4 \langle R_{12}R_{13}R_{23}\rangle] \\
&~~~-\frac{\beta^2}{n^2}\E[n^4\langle R_{12}^2R_{13}^2\rangle -n^4\langle R_{13}R_{24}R_{23}R_{14}\rangle]\\
&~~~-\frac{\beta^2}{n^2}\E[n^4\langle R_{12}R_{13}R_{23}\rangle-n^4\langle R_{13}R_{14}R_{24}R_{23}\rangle]\\
&~~~+\frac{\beta^2}{n^2}\E[n^4\langle R_{12}^2\rangle-2n^4\langle R_{12}R_{13}R_{23}\rangle+n^4\langle R_{14}R_{13}R_{23}R_{24}\rangle]\\
&~~~+\frac{\beta^2}{n^2}\E[n^4\langle R_{12}^4\rangle-2n^4\langle R_{12}^2R_{13}^2\rangle+n^4\langle R_{14}R_{13}R_{23}R_{24}\rangle]\\
&~~~+\frac{\beta^2}{n^2}\E[n^4\langle R_{12}^2\rangle-n^4\langle R_{12}^2R_{23}^2\rangle]\\
&~~~-\frac{\beta^2}{n^2}\E[n^4\langle R_{12}R_{13}R_{23}\rangle -n^4\langle R_{14}R_{12}R_{23}R_{34}\rangle]\\
&~~~-\frac{\beta^2}{n^2}\E[n^4\langle R_{12}R_{13}R_{23}\rangle -n^4\langle R_{12}R_{13}R_{24}R_{34}\rangle] \, .
\end{align*}

Collecting terms and multiplying by $\beta^2$, we get
\begin{equation}
\label{eq:term4}
\beta^2 \sum_{i,j,k,\ell}O_{ijk\ell}  = \beta^4 n^2\E\big[3\langle R_{12}^2\rangle+\langle R_{12}^4\rangle-6\langle R_{12}R_{13}R_{23}\rangle-4\langle R_{12}^2R_{23}^2\rangle+6\langle R_{12}R_{23}R_{34}R_{14}\rangle\big] \, .
\end{equation}

Now considering the diagonals of that expression where $k=\ell$, and summing over $i,j,k$, and multiplying by $\beta^2$ again, we get

\begin{align*}
 \beta^2 \sum_{i,j,k}O_{ijkk} &=  \frac{\beta^4}{n^2}\E[n^3\langle R_{12}^2\rangle -n^3\langle R_{12}R_{13}R_{23}\rangle]\\
 &~~~-\frac{\beta^4}{n^2}\E[n^3\langle R_{12}R_{23}R_{13}\rangle -n^3 \langle R_{13}R_{24}R_{1234}\rangle]\\
&~~~-\frac{\beta^4}{n^2}\E[n^3\langle R_{12}^2\rangle-n^3\langle R_{13}R_{24}R_{1234}\rangle]\\
&~~~+\frac{\beta^4}{n^2}\E[n^3\langle R_{12}^2\rangle-2n^3\langle R_{12}R_{23}R_{13}\rangle+n^3\langle R_{13}R_{24}R_{1234}\rangle]\\
&~~~+\frac{\beta^4}{n^2}\E[n^3\langle R_{12}^2\rangle-2n^3\langle R_{12}R_{23}R_{13}\rangle+n^3\langle R_{13}R_{24}R_{1234}\rangle]\\
&~~~+\frac{\beta^4}{n^2}\E[n^3\langle R_{12}^2\rangle -n^3\langle R_{12}R_{23}R_{13}\rangle]\\
&~~~-\frac{\beta^4}{n^2}\E[n^3\langle R_{12}^2\rangle -n^3\langle R_{13}R_{24}R_{1234}\rangle]\\
&~~~-\frac{\beta^4}{n^2}\E[n^3\langle R_{12}R_{23}R_{13}\rangle -n^3\langle R_{13}R_{24}R_{1234}\rangle] \, .
\end{align*}
After simplification, 
\begin{equation}
\label{eq:term5}
 \beta^2 \sum_{i,j,k}O_{ijkk}  = \beta^4 n \E\big[2\langle R_{12}^2\rangle-8\langle R_{12}R_{13}R_{23}\rangle+6\langle R_{13}R_{24}R_{1234}\rangle\big] \, .
\end{equation}

Putting together \eqref{eq:term3}, \eqref{eq:term4} and \eqref{eq:term5}, we obtain
\begin{align}\label{eq:term6}
\text{III} &= \beta^2n(n-1) \E[\langle R_{12}^2\rangle] + 2\beta^4 n \left(\E[\langle R_{12}^2\rangle]-4\E[\langle R_{12}R_{13}R_{23}\rangle] + 3\E[\langle R_{12}R_{34}R_{1234}\rangle]\right) \nonumber\\
&~~~+\beta^4 n^2\E[3\langle R_{12}^2\rangle+\langle R_{12}^4\rangle-6\langle R_{12}R_{13}R_{23}\rangle-4\langle R_{12}^2R_{23}^2\rangle+6\langle R_{12}R_{23}R_{34}R_{14}\rangle]\nonumber\\
&~~~-\left(\beta^4 n \E[2\langle R_{12}^2\rangle-8\langle R_{12}R_{13}R_{23}\rangle+6\langle R_{13}R_{24}R_{1234}\rangle]\right) \nonumber\\
&=\beta^2n(n-1) \E[\langle R_{12}^2\rangle] \nonumber\\
&~~~ +\beta^4 n^2\E[3\langle R_{12}^2\rangle+\langle R_{12}^4\rangle-6\langle R_{12}R_{13}R_{23}\rangle-4\langle R_{12}^2R_{23}^2\rangle+6\langle R_{12}R_{23}R_{34}R_{14}\rangle] \, .
\end{align}
Combining~\eqref{eq:term1},~\eqref{eq:term2} and~\eqref{eq:term6},  we obtain an expression for $\E \|P\|_F^2$:
\begin{align}\label{eq:normP}
\E \|P\|_F^2&=(1+\beta^2)^2n^2\E[\langle R_{12}^2\rangle]
-4\beta^2(1+\beta^2)n^2\left(\E[\langle R_{12}^2\rangle]-\E[\langle R_{12}R_{13}R_{23}\rangle]\right)\nonumber\\
&~~~+\beta^2n(n-1) \E[\langle R_{12}^2\rangle] \nonumber\\
&~~~+\beta^4 n^2\E[3\langle R_{12}^2\rangle+\langle R_{12}^4\rangle-6\langle R_{12}R_{13}R_{23}\rangle-4\langle R_{12}^2R_{23}^2\rangle+6\langle R_{12}R_{23}R_{34}R_{14}\rangle]\nonumber\\
&=(1-\beta^2)n^2 \E[\langle R_{12}^2\rangle]- \beta^2 n \E[\langle R_{12}^2\rangle] +\left(4\beta^2(1+\beta^2)-6\beta^4\right)n^2\E[\langle R_{12}R_{13}R_{23}\rangle]\nonumber\\
&~~~+\beta^4 n^2\E[\langle R_{12}^4\rangle] -4\beta^4n^2\E[\langle R_{12}^2R_{23}^2\rangle]+6\beta^4n^2\E[\langle R_{12}R_{23}R_{34}R_{14}\rangle] \, .
\end{align}

From Talagrand~\cite[Theorem 11.5.4]{talagrand2011mean2} and Bardina, M\'arquez-Carreras, Rovira and Tindel ~\cite{bardina2004higher}, we have the precise asymptotic expansions up to order $O(n^{-5/2})$ of all the overlaps involved in the above expressions:
\begin{gather}
\E\big\langle R_{12}^2\big\rangle = \frac{1}{n(1-\beta^2)}+\frac{-\beta^2(1+\beta^2)}{n^2(1-\beta^2)^4}+O(1/n^{5/2}) \, ,\\
\E\big\langle R_{12}R_{13}R_{23}\big\rangle= \frac{1}{n^2(1-\beta^2)^3}+O(1/n^{5/2}) \, ,\\
\E\big\langle R_{12}^4\big\rangle = \frac{3}{n^2(1-\beta^2)^2}+O(1/n^{5/2}) \, ,\\
\E\big\langle R_{12}^2R_{23}^2\big\rangle = \frac{1}{n^2(1-\beta^2)^2}+O(1/n^{5/2}) \, \\
\E\big\langle R_{12}R_{23}R_{34}R_{14}\big\rangle = O(1/n^{5/2}) \, .
\end{gather}
From this we deduce
\begin{align}\label{eq:normP2}
\E \|P\|_F^2&= n - \frac{\beta^2}{1-\beta^2} - \frac{\beta^2(1+\beta^2)}{(1-\beta^2)^3} + \frac{4\beta^2(1+\beta^2)-6\beta^4}{(1-\beta^2)^3} + \frac{3\beta^4}{(1-\beta^2)^2}-\frac{4\beta^4}{(1-\beta^2)^2} + O(1/\sqrt{n}) \\ 
&= n+ \frac{2\beta^2}{(1-\beta^2)^2}  + O(1/\sqrt{n}) \, .
\end{align}

Finally, combining the above formula with Eq.~\eqref{eq:trace} for $\E \Tr(P)$ we obtain
\begin{equation}
\E\big[\big\|P-I\big\|_F^2\big] =\frac{2\beta^4}{(1-\beta^2)^2}  + O(1/\sqrt{n}) \, .
\end{equation}

\section{Lower bounds at the critical temperature and low temperatures}
\label{sec:lowerb}
In this section, we investigate the tightness of our results. In particular, we show that the expected operator norm necessarily diverges as $\beta\to 1$. In the low temperature regime where $\beta>1$, we provide a simple linear lower bound on the expected operator norm. Finally, we consider the behavior of the operator norm near and at the critical temperature $\beta = 1$.

Below, we write $\cov(\mu_{\beta,n})$ to emphasize the dependence on the inverse temperature $\beta$ and $n$. Our results are derived using available results on moment overlaps combined with the following elementary claim:

\begin{lemma}
\label{lem:op_lb}
    For any $\beta \ge 0$, it holds that
    \begin{equation*}
        \E\big[\|\mathrm{cov}(\mu_{\beta})\|_{\mathrm{op}}\big]\geq n\E\langle R_{12}^2\rangle.
    \end{equation*}
\end{lemma}
\begin{proof}
 By the variational formula for eigenvalues, it holds deterministically that for any $\sigma\in \{-1,1\}^n$,
 \begin{equation*}
     \|\mathrm{cov}(\mu_{\beta})\|_{\mathrm{op}}\geq \frac{1}{n} \sigma^\top\mathrm{cov}(\mu_{\beta})\sigma=\frac{1}{n} \sigma^\top\langle \sigma_1\sigma_1^\top\rangle\sigma.
 \end{equation*}
 Averaging over $\sigma \sim \mu_{\beta}$ and then taking expectations over the disorder yields the claim.
\end{proof}

\subsection{High, near critical temperature}
A simple consequence of~\ref{lem:op_lb} is that the expected operator norm necessarily diverges as $\beta\to 1^-$:

\begin{proposition}
\label{prop:hightemp}
For $\beta<1$, it holds that 
\begin{equation}
\liminf_{n\to \infty} \E \big\|\cov(\mu_{\beta,n})\big\|_{\textup{op}} \geq \frac{1}{1-\beta^2} \, .
\end{equation}
In particular, 
\begin{equation}
\lim_{\beta\to 1^-}\liminf_{n\to \infty} \E \big\|\cov(\mu_{\beta,n}) \big\|_{\textup{op}} = + \infty \, .
\end{equation}
\end{proposition}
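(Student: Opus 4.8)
The plan is to bound $\|\cov(\mu_{\beta,n})\|_{\textup{op}}$ from below by a quantity built only from the replica overlap $R_{12}$ — using nothing about $\cov(\mu_{\beta,n})$ beyond positive semidefiniteness and its trace — and then to read off the conclusion from the high-temperature overlap asymptotics already recalled in Section~\ref{sec:proof}.

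Concretely, for each fixed realization of the disorder, $\cov(\mu_{\beta,n}) = (\langle\sigma_i\sigma_j\rangle)_{i,j}$ is positive semidefinite, with eigenvalues $\lambda_1\ge\cdots\ge\lambda_n\ge 0$ satisfying $\sum_i\lambda_i = \Tr\cov(\mu_{\beta,n}) = \sum_i\langle\sigma_i^2\rangle = n$ and $\sum_i\lambda_i^2 = \|\cov(\mu_{\beta,n})\|_F^2 = \sum_{i,j}\langle\sigma_i\sigma_j\rangle^2 = n^2\langle R_{12}^2\rangle$ (the replica identity already used in Section~\ref{sec:proof}). Since $\sum_i\lambda_i^2\le\lambda_1\sum_i\lambda_i$, this gives the deterministic bound
\[
\|\cov(\mu_{\beta,n})\|_{\textup{op}} = \lambda_1 \;\ge\; \frac{\|\cov(\mu_{\beta,n})\|_F^2}{\Tr\cov(\mu_{\beta,n})} \;=\; n\,\langle R_{12}^2\rangle \, .
\]
Taking expectations and invoking $n\,\E\langle R_{12}^2\rangle = (1-\beta^2)^{-1}+O(n^{-1/2})$ from \cite[Theorem 11.5.4]{talagrand2011mean2} (quoted above), one obtains $\liminf_{n\to\infty}\E\|\cov(\mu_{\beta,n})\|_{\textup{op}} \ge (1-\beta^2)^{-1}$, which is $\ge \sqrt{2/(\pi(1-\beta^2))}$ because $1-\beta^2\le 1<\pi/2$. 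The ``in particular'' assertion is then immediate upon letting $\beta\to 1^-$.

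If one prefers to recover the stated constant $\sqrt{2/(\pi(1-\beta^2))}$ verbatim and using only the central limit theorem for the overlap rather than its second-moment expansion, I would instead use the slightly weaker spectral inequality $\|\cov(\mu_{\beta,n})\|_{\textup{op}}\ge \|\cov(\mu_{\beta,n})\|_F/\sqrt n = \sqrt n\,\langle R_{12}^2\rangle^{1/2}\ge \sqrt n\,\langle|R_{12}|\rangle$, which after averaging over the disorder yields $\E\|\cov(\mu_{\beta,n})\|_{\textup{op}} \ge \E[\,|\sqrt n\,R_{12}|\,]$ (annealed expectation over the disorder and two independent replicas). Since $\sqrt n\,R_{12}$ converges in distribution to $\normal(0,(1-\beta^2)^{-1})$ — well known, and in any case a consequence of the moment expansions of \cite{bardina2004higher} via the method of moments — a truncation argument (apply weak convergence to $x\mapsto |x|\wedge M$, then let $M\to\infty$) gives $\liminf_n\E[\,|\sqrt n\,R_{12}|\,] \ge \E|\normal(0,(1-\beta^2)^{-1})| = \sqrt{2/(\pi(1-\beta^2))}$.

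I do not expect a real obstacle here: the only ideas are the trace-normalized (or Frobenius-to-operator) lower bound together with the identity $\|\cov(\mu_{\beta,n})\|_F^2 = n^2\langle R_{12}^2\rangle$, after which the overlap asymptotics finish the argument. The one mildly technical point — and it arises only in the variant that produces the stated constant literally — is the passage to the limit in $\E[\,|\sqrt n\,R_{12}|\,]$, which is handled by the truncation above or, equivalently, by the uniform integrability supplied by the higher-order moment bounds of \cite{bardina2004higher}.
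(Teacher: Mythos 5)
Your proposal is correct, and it actually contains two arguments. The fallback variant (operator norm $\ge$ Frobenius norm over $\sqrt{n}$, then Cauchy--Schwarz to pass from $\langle R_{12}^2\rangle^{1/2}$ to $\langle |R_{12}|\rangle$, then the CLT for $\sqrt{n(1-\beta^2)}\,R_{12}$ under $\E\langle\,\cdot\,\rangle$ plus Portmanteau/truncation) is precisely the paper's proof, down to the constant $\sqrt{2/\pi}=\E|z|$. Your primary argument is genuinely different and in fact stronger: since $\cov(\mu_{\beta,n})=\E_\mu[\sigma\sigma^\top]$ is positive semidefinite with $\Tr\cov(\mu_{\beta,n})=n$ and $\|\cov(\mu_{\beta,n})\|_F^2=n^2\langle R_{12}^2\rangle$, the elementary inequality $\sum_i\lambda_i^2\le\lambda_1\sum_i\lambda_i$ gives $\|\cov(\mu_{\beta,n})\|_{\textup{op}}\ge n\langle R_{12}^2\rangle$ pointwise in the disorder. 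Because this bound is \emph{linear} in $\langle R_{12}^2\rangle$, you can take expectations and plug in $n\,\E\langle R_{12}^2\rangle\to(1-\beta^2)^{-1}$ directly, with no need for the overlap CLT or any weak-convergence limiting argument; the resulting constant $(1-\beta^2)^{-1}$ dominates $\sqrt{2/(\pi(1-\beta^2))}$ since $1-\beta^2\le 1<\pi/2$, so the stated proposition (and its ``in particular'' clause) follows. The trade-off is that the paper's route needs only the distributional limit of $\sqrt{n}R_{12}$, whereas yours needs the sharp second-moment asymptotic --- but that asymptotic is already quoted and used in Section~\ref{sec:proof}, so nothing extra is required. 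Both arguments are sound; the truncation step you flag in the CLT variant is handled exactly as you describe.
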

\begin{proof}
From Lemma~\ref{lem:op_lb} and using Talagrand~\cite[Theorem 11.5.4]{talagrand2011mean2}, we immediately obtain
\begin{equation*}
    \mathbb{E}[\|\mathrm{cov}(\mu_{\beta})\|_{\mathrm{op}}]\geq n\mathbb{E}[\langle R_{12}^2\rangle]\geq \frac{1}{1-\beta^2}-O(1/n).
\end{equation*}
\end{proof}

Note that the previous result does not appear to immediately have any bearing on the behavior at $\beta = 1$ due to the subtlety of interchanging limits. We treat the case $\beta=1$ below using a similar analysis and leveraging some of the few known results in this regime.

\subsection{Low temperature}
Next, we show that the boundedness of the operator norm cannot extend past $\beta=1$; in fact, the operator norm necessarily grows linearly in $n$.

\begin{proposition}
For every $\beta>1$, there exists a constant $c(\beta)>0$ such that 
\begin{equation}
\E \big\|\cov(\mu_{\beta,n})\big\|_{\textup{op}}\geq c(\beta)\, n \, .
\end{equation}
\end{proposition}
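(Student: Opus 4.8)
The plan is to lower bound the operator norm by the quadratic form $\langle \mathbf{1}, \cov(\mu_{\beta,n}) \mathbf{1}\rangle / n$, since for any unit vector $v$ we have $\|\cov(\mu_{\beta,n})\|_{\textup{op}} \ge \langle v, \cov(\mu_{\beta,n}) v\rangle$ and choosing $v = \mathbf{1}/\sqrt{n}$ gives
\[
\|\cov(\mu_{\beta,n})\|_{\textup{op}} \ge \frac{1}{n}\sum_{i,j=1}^n \langle \sigma_i\sigma_j\rangle = \frac{1}{n}\Big\langle \Big(\sum_{i=1}^n \sigma_i\Big)^2 \Big\rangle = n\,\big\langle \overline{\sigma}^2\big\rangle,
\]
where $\overline{\sigma} = \frac1n\sum_i \sigma_i$ is the (signed) magnetization. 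By Jensen applied to the Gibbs average, $n\langle \overline{\sigma}^2\rangle \ge n\langle |\overline\sigma|\rangle^2$, so it suffices to show that $\E\langle |\overline\sigma|\rangle$ is bounded below by a positive constant $c(\beta)$ for $\beta>1$; then $\E\|\cov\|_{\textup{op}} \ge n\,\E[\langle|\overline\sigma|\rangle^2] \ge n\,(\E\langle|\overline\sigma|\rangle)^2 \ge c(\beta)^2 n$ after one more Cauchy--Schwarz (or simply bounding $\E[X^2]\ge (\E X)^2$ for $X = \langle|\overline\sigma|\rangle$).

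The key input is thus a quantitative statement that the SK model with no external field is ``magnetized at order $1$'' for $\beta>1$: there is $c(\beta)>0$ with $\liminf_{n} \E\langle|\overline\sigma|\rangle \ge c(\beta)$. First I would recall that the free energy of the SK model is known to exceed the annealed/high-temperature value strictly for $\beta>1$ (this is classical, going back to the replica-symmetry-breaking picture and rigorously via Guerra--Talagrand / Parisi formula), which forces the overlap $R_{12}$ to be bounded away from $0$ with non-vanishing probability. Concretely, the Parisi measure has support whose maximum $q^*(\beta)>0$ for $\beta>1$, and one has $\E\langle R_{12}^2\rangle \to \int q^2\,d\zeta(q) > 0$; more simply, $\E\langle R_{12}\rangle \ge \E\langle R_{12}^2\rangle^{1/2}$ is bounded below, or one can use that $\E\langle R_{12}\rangle = \E[\overline\sigma_1 \overline\sigma_2] = \E[(\E_{\text{Gibbs}} \overline\sigma)^2]$... and then relate this to $\E\langle|\overline\sigma|\rangle$. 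The cleanest route avoiding symmetry subtleties (the measure is invariant under $\sigma \mapsto -\sigma$, so $\langle\overline\sigma\rangle = 0$ identically, and we genuinely need $\langle|\overline\sigma|\rangle$ or $\langle \overline\sigma^2\rangle$): use that $\langle \overline\sigma^2\rangle = \langle R_{12}\cdot(\text{something})\rangle$ — actually $\langle \overline\sigma^2 \rangle$ is not $\langle R_{12}\rangle$; rather $\E\langle\overline\sigma^2\rangle = \E\langle R_{12}\rangle$ only fails because of replica structure. Let me instead directly invoke: $n\langle\overline\sigma^2\rangle = \frac1n\sum_{i,j}\langle\sigma_i\sigma_j\rangle$, and $\E[\frac1n\sum_{i,j}\langle\sigma_i\sigma_j\rangle] = \frac1n\sum_{i,j}\E\langle\sigma_i\rangle\langle\sigma_j\rangle + \frac1n\sum_{i,j}\E[\langle\sigma_i\sigma_j\rangle - \langle\sigma_i\rangle\langle\sigma_j\rangle]$; the first piece is $n\,\E[\overline{\langle\sigma\rangle}^2]$ which is exactly $n\,\E\langle R_{12}\rangle$ (since $\E\langle R_{12}\rangle = \frac1n\sum_i \E[\langle\sigma_i\rangle^2]$ ... wait, $\E\langle R_{12}\rangle = \frac1n\sum_i\E\langle\sigma_i^1\sigma_i^2\rangle = \frac1n\sum_i \E[\langle\sigma_i\rangle^2]$, and $n\,\E[\overline{\langle\sigma\rangle}^2] = \frac1n\sum_{i,j}\E[\langle\sigma_i\rangle\langle\sigma_j\rangle]$ which is $\ge \frac1n\sum_i \E[\langle\sigma_i\rangle^2] = \E\langle R_{12}\rangle$). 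So $\E\|\cov\|_{\textup{op}} \ge n\,\E\langle\overline\sigma^2\rangle \ge n\,\E\langle R_{12}\rangle$, provided the fluctuation term $\frac1n\sum_{i,j}\E[\langle\sigma_i\sigma_j\rangle - \langle\sigma_i\rangle\langle\sigma_j\rangle] = n\,\E[\langle\overline\sigma^2\rangle_{\text{conn}}]$ is nonnegative — which it is, being $\E$ of a variance under $\mu$ of the statistic $\sum_i\sigma_i/\sqrt n$, hence $\ge 0$. Then $n\,\E\langle R_{12}\rangle \ge n\,\E\langle R_{12}^2\rangle^{1/2}\cdot$(sign issues)... here one must be careful since $\E\langle R_{12}\rangle$ could a priori be small even if $\E\langle R_{12}^2\rangle$ is large; but by the Ghirlanda--Guerra / positivity of overlap results, or more elementarily since $R_{12} = \overline{\sigma^1\cdot\sigma^2}$ satisfies $\E\langle R_{12}\rangle = \E[\overline{\langle\sigma\rangle}^2]\ge 0$ always, and the lower bound we actually want is $\E\langle \overline\sigma^2\rangle = n^{-1}\E\langle(\sum\sigma_i)^2\rangle / n \cdot n$...

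Let me streamline the write-up: I would take $v = \mathbf 1/\sqrt n$, get $\E\|\cov\|_{\textup{op}} \ge \E[n\langle\overline\sigma^2\rangle]$, then note $n\langle\overline\sigma^2\rangle = \langle n R_{12}\cdot\operatorname{sign}\text{-free}\rangle$ is not quite it — instead use the two-replica identity $\langle\overline\sigma^2\rangle = \langle R_{12}\rangle$ does NOT hold pointwise but $\E\langle\overline\sigma^2\rangle \ge \E\langle R_{12}\rangle$ as shown, and invoke the standard fact (e.g. Talagrand, \emph{Mean Field Models}, or Panchenko's book) that for $\beta>1$, $\liminf_n \E\langle R_{12}\rangle \ge q^*(\beta) > 0$ where $q^*(\beta)$ is the largest point in the support of the Parisi measure; alternatively cite that $\lim_n \E\langle R_{12}^2\rangle = \int q^2 d\zeta(q)$ and combine with the Ghirlanda--Guerra identities implying $\E\langle R_{12}\rangle \ge \int q\, d\zeta(q)$, both positive for $\beta>1$. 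The main obstacle is precisely this step: cleanly extracting a positive constant lower bound on $\E\langle\overline\sigma^2\rangle$ (equivalently on the overlap) from the $\beta>1$ replica-symmetry-breaking theory without getting tangled in the spin-flip symmetry — the naive quantities $\langle\overline\sigma\rangle$ and $\E\langle R_{12}\rangle$ when defined with the ``wrong'' sign convention vanish, so one must work with $\overline\sigma^2$, $|\overline\sigma|$, or $R_{12}^2$ and then argue these are bounded below. I expect the cleanest citation-based route is: $\E\|\cov\|_{\textup{op}} \ge \E[n\langle\overline\sigma^2\rangle] = \E[\langle\overline{\sigma^1\cdot\sigma^2}\rangle \cdot n]$ — no; rather $\E[n\langle\overline\sigma^2\rangle] \ge \E[n\langle R_{12}^2\rangle^{1/2}]\cdot(\ldots)$, and then use that by the main result of de Almeida--Thouless-type instability / the positivity of the Parisi measure's second moment, $\E\langle R_{12}^2\rangle \to \int q^2\, d\zeta > 0$, together with a uniform integrability / concentration argument to pass the $\liminf$ inside, yielding the constant $c(\beta) = \tfrac12(\int q^2 d\zeta)^{1/2}$ for all large $n$ and then adjusting the constant to cover all $n\ge 1$. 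I would present it with the quadratic-form bound and the magnetization lower bound as the two displayed steps, citing Talagrand/Panchenko for the latter, and flag the interchange-of-limits point as the only delicate issue.
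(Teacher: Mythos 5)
Your core step --- testing the quadratic form against the deterministic vector $v=\mathbf 1/\sqrt n$ --- cannot produce a diverging lower bound, so the argument fails at its foundation. In the zero-field SK model the gauge transformation $\sigma_j\mapsto-\sigma_j$, $g_{ij}\mapsto-g_{ij}$ (for a fixed $j$ and all $i$) leaves both the Hamiltonian and the law of the disorder invariant while flipping the sign of $\langle\sigma_i\sigma_j\rangle$ for $i\neq j$; hence $\E\langle\sigma_i\sigma_j\rangle=0$ for all $i\neq j$, and
\[
\E\big[n\,\langle\overline\sigma^{\,2}\rangle\big]=\frac1n\sum_{i,j}\E\langle\sigma_i\sigma_j\rangle=\frac1n\sum_{i}\E\langle\sigma_i^2\rangle=1
\]
exactly, for every $\beta$ and every $n$. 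The same computation shows $\E[v^\top\cov(\mu)\,v]=\|v\|^2$ for \emph{any} deterministic $v$: at low temperature the top eigenvector of $\cov(\mu)$ is aligned with the disorder, so no fixed test direction sees it. The intermediate facts you lean on are also unavailable here: by spin-flip symmetry $\langle\sigma_i\rangle\equiv0$, so $\langle R_{12}\rangle=\frac1n\sum_i\langle\sigma_i\rangle^2=0$ identically, and the claim $\liminf_n\E\langle R_{12}\rangle\ge q^*(\beta)>0$ is false for zero external field (the overlap distribution is symmetric about $0$). You flag the symmetry problem several times but never get past it, and the final bound your decomposition actually yields is $\E\|\cov(\mu)\|_{\textup{op}}\ge 1$.

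The quantity that does work is the one you circle near the end, $\E\langle R_{12}^2\rangle$, but it must be reached through the Frobenius norm rather than a quadratic form: $\|\cov(\mu)\|_{\textup{op}}\ge\|\cov(\mu)\|_F/\sqrt n$ and $\|\cov(\mu)\|_F^2=\sum_{i,j}\langle\sigma_i\sigma_j\rangle^2=n^2\langle R_{12}^2\rangle$, so that $\E\|\cov(\mu)\|_{\textup{op}}\ge\sqrt n\,\E\big[\langle R_{12}^2\rangle^{1/2}\big]\ge\sqrt n\,\E\langle R_{12}^2\rangle$, using $|R_{12}|\le1$ in the last step. One then cites $\lim_{n\to\infty}\E\langle R_{12}^2\rangle=\int q^2\,\zeta^*_\beta(\rmd q)>0$ for $\beta>1$, since the Parisi measure is not $\delta_0$ past the transition. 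This is the paper's proof; squaring the entries is precisely what removes the sign cancellation that kills the test-vector approach.
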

\begin{proof}
From Talagrand~\cite[Equation (14.417)]{talagrand2011mean2}, it holds that
\begin{equation}\label{eq:zeta}
\lim_{n\to\infty}\E \big\langle R_{12}^2 \big\rangle = \int q^2 \zeta^*_{\beta}(\rmd q) \, , 
\end{equation}
where $\zeta^*_{\beta}$ is called the Parisi measure and is the unique minimizer of the Parisi functional; see~\cite{auffinger2015parisi} and~\cite{talagrand2011mean2,panchenko2013sherrington} for definitions. It is further known that $\zeta^*_{\beta} \neq \delta_{0}$ when $\beta>1$, so the right-hand side in~\eqref{eq:zeta} is some constant $c(\beta)>0$; see~\cite[Section 5.2]{alaoui2022sampling} for more details. The previous display with Lemma~\ref{lem:op_lb} yields the desired lower bound.
\end{proof}

\subsection{At the transition}
To complete this picture, we now consider the operator norm near and at the critical temperature $\beta=1$. The next result refines Proposition~\ref{prop:hightemp} by showing that for a sequence $\beta_n\to 1^-$, one can obtain similar lower bounds on the operator norm with different exponents:

\begin{proposition}\label{prop:near_crit}
Fix any nonnegative function $f(n)$ satisfying $f(n)\to \infty$ and $f(n)=o(n^{1/3})$. There exists a sequence $\beta_n\to 1^-$ such that for all but finitely many $n$,
\begin{equation}
\E \big\|\cov(\mu_{\beta_n,n})\big\|_{\textup{op}} \ge f(n) \, .
\end{equation}
\end{proposition}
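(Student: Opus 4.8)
The plan is to follow the same template as Proposition~\ref{prop:hightemp}: lower bound the operator norm by the normalized Frobenius norm, $\E\|\cov(\mu_{\beta,n})\|_{\textup{op}} \ge \E\big[\langle nR_{12}^2\rangle^{1/2}\big]$, and then show that for a suitable sequence $\beta_n \to 1^-$ the right-hand side grows like $f(n)$. The key point is that the limiting statement ``$(n(1-\beta^2))^{1/2} R_{12}$ is asymptotically standard Gaussian'' is really a statement with an error term that is uniform in $\beta$ only up to a certain distance from criticality. Concretely, from Talagrand's sharp overlap asymptotics (the same expansions quoted before Eq.~\eqref{eq:normP2}), one has $n\E\langle R_{12}^2\rangle = \frac{1}{1-\beta^2} + O\!\big(\frac{1}{n(1-\beta^2)^4}\big)$, with the implied constant absolute, valid as long as $n(1-\beta^2)^{4}$ (or some such power) stays bounded below. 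First I would locate the precise statement of the near-critical overlap estimate in Talagrand (Chapter~11 of~\cite{talagrand2011mean2}) that allows $\beta = \beta_n$ to approach $1$, record the allowed rate, and read off that $n(1-\beta_n^2) \to \infty$ with $n^{1/3}(1-\beta_n^2)\to\infty$ (the regime announced in the discussion after Theorem~\ref{thm:main3}) is inside the range of validity.

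Next I would choose the sequence. Given the target $f(n) = o(n^{1/6})$, set $1-\beta_n^2 = 1/f(n)^2$, so that $f(n) = o(n^{1/6})$ forces $n(1-\beta_n^2) = n/f(n)^2 \to \infty$ and indeed $n^{1/3}(1-\beta_n^2) = n^{1/3}/f(n)^2 \to \infty$, placing us comfortably inside the near-critical window. With this choice, $\sqrt{n\,\E\langle R_{12}^2\rangle} = (1-\beta_n^2)^{-1/2}(1+o(1)) = f(n)(1+o(1))$. The remaining task is to pass from the second moment $\E\langle nR_{12}^2\rangle$ to the quantity we actually control, namely $\E\big[\langle nR_{12}^2\rangle^{1/2}\big]$, since Jensen points the wrong way. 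As in Proposition~\ref{prop:hightemp} I would instead lower bound via $\langle nR_{12}^2\rangle^{1/2} \ge \langle \sqrt{n}|R_{12}|\rangle$ and then use a Paley--Zygmund type argument: the near-critical CLT (or just control of the fourth moment $\E\langle R_{12}^4\rangle = 3/(n^2(1-\beta_n^2)^2) + \cdots$, again from the quoted expansions) shows that $\sqrt{n}|R_{12}|$ concentrates on the scale $(1-\beta_n^2)^{-1/2} = f(n)$ under $\E\langle\,\cdot\,\rangle$, so $\E\langle \sqrt n|R_{12}|\rangle \ge c\,f(n)$ for an absolute constant $c$. Adjusting $f$ by the constant $c$ at the outset (replace $f$ by $f/c$, still $o(n^{1/6})$ and still diverging) gives the stated bound with the clean constant $1$.

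The main obstacle is the first step: isolating from Talagrand's book a version of the overlap asymptotics that is genuinely uniform over the near-critical window $\beta_n \to 1^-$ at the required rate, rather than a fixed-$\beta$ statement. Talagrand does prove near-critical results (this is the content of the ``$n^{1/3}(1-\beta^2)\to\infty$'' regime he studies), but one must check that the error bounds on $\E\langle R_{12}^2\rangle$ and $\E\langle R_{12}^4\rangle$ degrade only polynomially in $(1-\beta_n^2)^{-1}$ and so remain lower-order once $1-\beta_n^2 = 1/f(n)^2$ with $f(n)=o(n^{1/6})$. A secondary, more routine obstacle is the Paley--Zygmund step: one needs that $R_{12}$ does not have a heavy upper tail under $\E\langle\,\cdot\,\rangle$ at scale $f(n)/\sqrt n$, which again follows from the fourth-moment estimate but should be stated carefully. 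Everything after that is a one-line computation, exactly paralleling the proof of Proposition~\ref{prop:hightemp}.
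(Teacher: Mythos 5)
Your proposal is correct and follows essentially the same route as the paper: the identical choice $1-\beta_n^2 = 1/f(n)^2$ (equivalently $n^{1/3}(1-\beta_n^2)\to\infty$), the same Frobenius-norm lower bound $\E\|\cov\|_{\textup{op}} \ge \sqrt{n}\,\E\langle|R_{12}|\rangle$, and the same key input, namely Talagrand's near-critical CLT for the overlap in the window $n^{1/3}(1-\beta^2)\to\infty$ (Theorem~11.7.1 of~\cite{talagrand2011mean2}), which resolves the uniformity concern you flag as the main obstacle. The only cosmetic difference is that the paper extracts $\E\langle|R_{12}|\rangle \ge c\,(n(1-\beta_n^2))^{-1/2}$ directly from the CLT via the Portmanteau theorem rather than via a Paley--Zygmund/fourth-moment argument, which changes nothing of substance.
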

\begin{proof}
For any such function $f(n)$, define a sequence $\beta_n$ by $f(n)=\frac{1}{2}\frac{1}{1-\beta_n^2}$. Then $n^{1/3}(1-\beta_n^2)\to\infty$ by construction. From Talagrand~\cite[Theorem 11.7.1]{talagrand2011mean2}, this condition implies 
\begin{equation*}
    n(1-\beta_n^2)\mathbb{E}[\langle R_{12}^2\rangle]\to 1\, .
\end{equation*}
This implies by Lemma~\ref{lem:op_lb} that for all large enough $n$, $\E \big\|\cov(\mu_{\beta_n,n})\big\|_{\textup{op}}\geq \frac{1}{2(1-\beta_n^2)}\geq f(n)$ as needed.
\end{proof}

Finally, we directly consider the behavior at the critical temperature $\beta=1$. While it is expected that $\E\langle R_{12}^2\rangle$ is increasing with respect to $\beta$, which would imply a lower bound from the previous results, this does not appear to be known. Instead, we give a slightly weaker polynomial lower bound that holds unconditionally by applying known results in this setting:

\begin{proposition}\label{prop:critical}
For $\beta=1$, there is an absolute constant $c>0$ such that
\begin{equation}
\E \big\|\cov(\mu_{\beta,n})\big\|_{\textup{op}} \ge c\Big(\frac{n}{\log n}\Big)^{3/16}  \, . 
\end{equation}
\end{proposition}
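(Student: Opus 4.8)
The plan is to mimic the Frobenius-norm lower bound used in Proposition~\ref{prop:hightemp} and Proposition~\ref{prop:near_crit}, namely $\E\|\cov(\mu_{1,n})\|_{\textup{op}}^2 \ge \frac{1}{n}\E\|\cov(\mu_{1,n})\|_F^2 = \E\langle n R_{12}^2\rangle$, and then to produce an unconditional lower bound of order $(n/\log n)^{3/16}$ on $n\,\E\langle R_{12}^2\rangle$ at $\beta=1$. The subtlety is that, unlike in the high-temperature regime, there is no clean limit theorem pinning down the scale of $R_{12}$ at criticality; the remark preceding the statement points out that the (conjectured, but unproven) truth is $\E\langle R_{12}^2\rangle = \Theta(n^{-2/3})$, which would give $n\,\E\langle R_{12}^2\rangle = \Theta(n^{1/3})$, much stronger than what we claim. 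So the first step is to identify which \emph{rigorously known} estimate on the overlap at $\beta=1$ we can feed in. The natural candidate is a known bound of the form $\E\langle R_{12}^2\rangle \gtrsim (n \log n)^{-\alpha}$ for some $\alpha<1$ coming from the analysis of the SK model at criticality; the exponent $3/16$ strongly suggests invoking a result that controls $\E\langle R_{12}^2\rangle$ or a related quantity (such as $\E\langle |R_{12}|\rangle$, or the free-energy fluctuations) from Talagrand's book or from the literature on the critical SK model, and then converting via Cauchy--Schwarz.

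Concretely, the steps I would carry out are: (i) write $\E\|\cov(\mu_{1,n})\|_{\textup{op}}^2 \ge \E\langle nR_{12}^2\rangle$, exactly as in the earlier propositions but keeping the square so as not to lose a Jensen factor; (ii) reduce the problem to lower bounding $\E\langle R_{12}^2\rangle$ at $\beta=1$; (iii) invoke the relevant known critical estimate. One route: it is known (e.g.\ from the Guerra--Toninelli / Talagrand interpolation analysis, or from Chen's work on the critical SK model) that at $\beta=1$ one has a lower bound of the form $\E\langle R_{12}^2\rangle \ge c\,n^{-1}(n/\log n)^{3/16}$, or equivalently that the ``effective temperature'' deviation $1-\beta^2$ at which the Gaussian CLT for $R_{12}$ still holds can be taken as small as $n^{-1/3}(\log n)^{\text{something}}$; plugging $\beta_n$ with $1-\beta_n^2 \asymp (\log n/n)^{1/3}\cdot(\text{polylog})$ into the CLT of Talagrand~\cite[Theorem 11.7.1]{talagrand2011mean2} as in Proposition~\ref{prop:near_crit} and then using monotonicity or a comparison between $\mu_{\beta_n,n}$ and $\mu_{1,n}$ would transfer the bound to $\beta=1$ itself. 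If a direct monotonicity statement ($\beta \mapsto \E\langle R_{12}^2\rangle$ increasing) is not available, one can instead use a soft comparison: the Gibbs average $\langle R_{12}^2\rangle$ can be compared across nearby $\beta$ by a one-parameter interpolation, controlling $\frac{\de}{\de\beta}\E\langle R_{12}^2\rangle$ by Gaussian integration by parts and Lemma~\ref{lem:partials}-type identities, so that shrinking $\beta$ from $1$ to $\beta_n$ changes $\E\langle R_{12}^2\rangle$ by a lower-order amount.

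I expect the main obstacle to be exactly this transfer from near-critical $\beta_n$ to $\beta=1$: the near-critical CLT (Talagrand Theorem 11.7.1) requires $n^{1/3}(1-\beta_n^2)\to\infty$, so it genuinely degenerates at $\beta=1$, and one cannot simply set $\beta_n=1$. The cleanest fix is to choose $\beta_n \to 1^-$ with $1-\beta_n^2$ as small as the CLT allows (up to logarithmic slack, i.e.\ $1-\beta_n^2 \asymp (\log n)^{a} n^{-1/3}$ for a suitable $a$ — this is where the $\log n$ in the final bound enters), obtain $n\,\E\langle R_{12}^2\rangle \gtrsim (n(1-\beta_n^2))^{1/2} \asymp (n/\log n)^{1/3}\cdot(\text{correction})$ at $\beta_n$, and then show that passing from $\beta_n$ to $\beta=1$ costs at most a constant or sub-polynomial factor. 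The bookkeeping of which logarithmic power is needed for the CLT error bounds to be negligible relative to the main term — and reconciling that with the stated exponent $3/16$ (rather than the heuristic $1/3$) — is the delicate accounting; the drop from $1/3$ to $3/16$ presumably reflects the weakest quantitative form of the critical CLT/overlap bound that is actually proved in the literature, combined with a Cauchy--Schwarz loss, so identifying and correctly citing that quantitative input is the crux. Once that input is in hand, the rest is the routine chain $\E\|\cov\|_{\textup{op}}^2 \ge \E\|\cov\|_F^2/n = \E\langle nR_{12}^2\rangle \ge c(n/\log n)^{3/16}$.
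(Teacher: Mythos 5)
Your reduction $\E\|\cov(\mu_{1,n})\|_{\textup{op}}^2 \ge \tfrac{1}{n}\E\|\cov(\mu_{1,n})\|_F^2 = n\,\E\langle R_{12}^2\rangle$ is exactly the paper's first step, and you correctly diagnose that the whole difficulty is an \emph{unconditional} lower bound on $\E\langle R_{12}^2\rangle$ at $\beta=1$. But neither of your two proposed routes closes this. The route via the near-critical CLT at $\beta_n\to 1^-$ plus a transfer to $\beta=1$ is precisely what the paper says cannot be done with current knowledge: monotonicity of $\beta\mapsto\E\langle R_{12}^2\rangle$ is open, and your fallback --- controlling $\frac{\de}{\de\beta}\E\langle R_{12}^2\rangle$ by interpolation over an interval of length $\asymp n^{-1/3}$ --- is not carried out and is not obviously controllable at criticality, where the very overlap moments you would need to bound the derivative are the unknown quantities. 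Your other route (``invoke a known bound of the form $\E\langle R_{12}^2\rangle \ge c\,n^{-1}(n/\log n)^{3/16}$'') is circular: no such off-the-shelf estimate is cited, and producing it is the content of the proposition.

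The missing idea is a splitting argument combining two genuinely different inputs at $\beta=1$: (a) Chatterjee's \emph{third-moment} lower bound $\E\langle|R_{12}|^3\rangle \ge 1/(Ln)$ (Talagrand, Theorem 11.7.6), and (b) Talagrand's tail estimate $\P(|R_{12}|\ge\sqrt{x}) \le Ln\exp(-nx^2/L)+\exp(-n^{3/2}x^4/(L\sqrt{\log n}))$ valid for $x\gtrsim(\log n/n)^{3/8}$. Choosing $x\asymp(\log n/n)^{3/8}$ so that the tail probability is $O(n^{-2})$, one writes
\begin{equation*}
\frac{1}{Ln}\le \E\big\langle |R_{12}|^3\mathbf{1}_{|R_{12}|\ge\sqrt{x}}\big\rangle+\E\big\langle |R_{12}|^3\mathbf{1}_{|R_{12}|<\sqrt{x}}\big\rangle \le \P\big(|R_{12}|\ge\sqrt{x}\big)+\sqrt{x}\,\E\big\langle R_{12}^2\big\rangle\,,
\end{equation*}
using $|R_{12}|\le 1$ on the tail and $|R_{12}|^3\le\sqrt{x}\,R_{12}^2$ on the bulk. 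Rearranging gives $\E\langle R_{12}^2\rangle\ge c\,n^{-1}(n/\log n)^{3/16}$, and the exponent $3/16$ is simply $\tfrac12\cdot\tfrac38$, i.e.\ the loss from trading one power of $|R_{12}|$ for $\sqrt{x}$. Without this third-moment-versus-second-moment comparison (or some equivalent device), your outline does not yield the stated bound.
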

\begin{proof}
We leverage some known results on overlap convergence at $\beta = 1$. First, a result of Chatterjee~\cite[Theorem 11.7.6]{talagrand2011mean2} shows a lower bound on the third moment of $R_{12}$: there exists a constant $L >0$ such that
\begin{equation}
\E\big\langle \vert R_{12}\vert^3\big\rangle \geq \frac{1}{Ln} \, .
\end{equation}
Next, we need a bound on the decay of tail probability of $R_{12}$. This is the content of a result of Talagrand~\cite[Theorem 2.14.5]{talagrand2003challenge} which we quote here: for $\beta \le 1$ and for all $x \ge L (\log n / n)^{3/8}$ where $L = L(\beta) < \infty$, it holds that  
\begin{equation}\label{eq:tailR0}
\P\left(|R_{12}| \geq \sqrt{x} \right) \leq L n \exp\big( - n x^2 \big/ L\big) +  \exp\Big( - n^{3/2} x^4 \big/ (L\sqrt{\log n}) \Big)  \, ,
\end{equation}
where the probability is over the distribution of $R_{12}$ induced by the measure $\E\langle \, \cdot \, \rangle^{\otimes 2}$.
Taking $x = \max\{1, (2/L^3)^{1/4}\} \cdot L (\log n / n)^{3/8}$, we find that for $n$ large enough, 
\begin{equation}\label{eq:tailR}
\P\left(|R_{12}|\geq \sqrt{x} \right) \leq \frac{2}{n^2} \, .
\end{equation}
Combining these results shows that
\begin{equation*}
\frac{1}{Ln}\leq \E\big\langle \vert R_{12}\vert^3\mathbf{1}_{|R_{12}| \ge \sqrt{x}}\big\rangle + \E\big\langle \vert R_{12}\vert^3\mathbf{1}_{|R_{12}| < \sqrt{x}} \big\rangle \leq  \P\big(\vert R_{12}\vert \geq \sqrt{x}\big)+ \sqrt{x}\E\big\langle R_{12}^2\big\rangle \, .
\end{equation*}
 Using~\eqref{eq:tailR} and rearranging yields
\begin{equation}
\E\big\langle R_{12}^2\big\rangle\geq \frac{c}{n^{13/16}(\log n)^{3/16}} \, ,
\end{equation}
for some constant $c>0$. The result then follows immediately from Lemma~\ref{lem:op_lb}.
\end{proof}


\vspace{5mm}
\textbf{Acknowledgments.} The first author is grateful to Ronen Eldan for valuable conversations about bounding the covariance matrix of disordered spin systems which inspired this work.  We also thank the anonymous referees for their feedback, and for Remark~\ref{rem:beta4} on the predicted $O(\beta^4)$ behavior in the bound Eq.~\eqref{eq:beta4}. 
\vspace{5mm}

\bibliographystyle{amsalpha}
\bibliography{sn-bibliography}

\end{document}